\renewcommand{\L}{\mathbb{L}}
\renewcommand{\P}{\mathbf{P}}
\newcommand{\Q}{\mathbf{Q}}
\newcommand{\Z}{\mathbf{Z}}
\newcommand{\sC}{\mathcal{C}}
\newcommand{\sD}{\mathcal{D}}
\newcommand{\sM}{\mathcal{M}}
\newcommand{\sO}{\mathcal{O}}
\newcommand{\un}{\mathbf{1}}
\renewcommand{\o}{\mathrm{o}}
\newcommand{\Sm}{\operatorname{\mathbf Sm}}
\newcommand{\alg}{{\operatorname{alg}}}
\renewcommand{\hom}{{\operatorname{hom}}}
\newcommand{\num}{{\operatorname{num}}}
\newcommand{\tnil}{{\operatorname{\otimes\mathrm{nil}}}}
\newcommand{\ab}{{\operatorname{ab}}}
\newcommand{\proj}{{\operatorname{proj}}}
\newcommand{\rat}{{\operatorname{rat}}}
\newcommand{\rep}{{\operatorname{rep}}}
\newcommand{\car}{\operatorname{char}}
\newcommand{\Corr}{\operatorname{Corr}}
\newcommand{\Pic}{\operatorname{Pic}}
\newcommand{\End}{\operatorname{End}}
\newcommand{\Coker}{\operatorname{Coker}}
\newcommand{\Ker}{\operatorname{Ker}}
\newcommand{\Griff}{\operatorname{Griff}}
\renewcommand{\phi}{\varphi}
\renewcommand{\epsilon}{\varepsilon}
\newcommand{\inj}{\hookrightarrow}
\newcommand{\surj}{\rightarrow\!\!\!\!\!\rightarrow}
\newcommand{\by}{\xrightarrow}
\newcommand{\iso}{\by{\sim}}
\renewcommand{\lim}{\varprojlim}
\newcounter{spec}
\newenvironment{thlist}{\begin{list}{\rm{(\roman{spec})}}%
{\usecounter{spec}\labelwidth=20pt\itemindent=0pt\labelsep=10pt}}%
{\end{list}}%
\newtheorem{thm}{Theorem}
\newtheorem{conj}{Conjecture}
\newtheorem{lemma}{Lemma}
\newtheorem{prop}{Proposition}
\newtheorem{cor}{Corollary}
\theoremstyle{definition}
\newtheorem{defn}{Definition}
\theoremstyle{remark}
\newtheorem{rque}{Remark}
\newtheorem{rques}{Remarks}
\newtheorem{ex}{Example}
\newtheorem{qn}{Question}
\newtheorem*{qns}{Questions}
\begin{document}
\title{Some remarks on the smash-nilpotence conjecture}
\author[B. Kahn]{Bruno Kahn}
\address{CNRS, Sorbonne Université and Université Paris Cité, IMJ-PRG\\ Case 247\\4 place
Jussieu\\75252 Paris Cedex 05\\France}
\email{bruno.kahn@imj-prg.fr}
\begin{abstract}
We discuss cases where Voevodsky's smash nilpotence conjecture is known, and give a few new ones. In particular we explain a theorem of the cube for $1$-cycles, which is due to Oussama Ouriachi.
\end{abstract}
\date{May 30, 2024}
\keywords{Algebraic cycle, motive, smash-nilpotence}
\subjclass[2020]{14C25, 14C15}
\maketitle

\hfill {\it To the memory of Jacob Murre}

\subsection*{Introduction} Let $X$ be a smooth projective variety over a field $k$. An algebraic cycle $Z$ on $X$ is said to be \emph{smash-nilpotent} if there exists an integer $N>0$ such that $Z^N$ is rationally equivalent to $0$ over $X^N$. This notion is due to Voevodsky \cite{voe}, who proved:

\begin{thm}[\protect{\cite[Cor. 3.2]{voe}}]\label{t0}
Any cycle on $X$ (with rational coefficients) which is algebraically equivalent to $0$ is smash-nilpotent. 
\end{thm}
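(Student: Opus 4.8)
The plan is to reduce, by a correspondence argument, to a statement about zero-cycles on a curve, and from there to the nilpotence of the augmentation ideal of the Chow group of an abelian variety under the Pontryagin product.

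First I would record that if $Z$ is algebraically equivalent to $0$ then, with $\mathbf{Q}$-coefficients, there exist a smooth projective curve $C/k$, a zero-cycle $\gamma$ on $C$ of degree $0$, and a correspondence $T\in\mathrm{CH}(C\times X)_{\mathbf{Q}}$ with $Z=T_*\gamma$. Because the external product is monoidal one has $(T_*\gamma)^{\boxtimes N}=(T^{\boxtimes N})_*(\gamma^{\boxtimes N})$, where $T^{\boxtimes N}$ is read as a correspondence from $C^N$ to $X^N$; hence smash-nilpotence passes through the action of correspondences, and it suffices to show that $\gamma^{\boxtimes N}=0$ in $\mathrm{CH}_0(C^N)_{\mathbf{Q}}$ for some $N$. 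We may assume $g:=\mathrm{genus}(C)\ge 1$, else $\gamma\sim 0$; and we may freely extend scalars to $\bar k$ at the end, since vanishing over $\bar k$ already holds over a finite extension and then descends, the composite of restriction and transfer being multiplication by the degree.

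The geometric core is the following. The class $\gamma^{\boxtimes n}\in\mathrm{CH}_0(C^n)_{\mathbf{Q}}$ is $S_n$-invariant; the quotient $\pi\colon C^n\to C^{(n)}=\Sym^n C$ is finite and flat of degree $n!$ between smooth varieties, so $\pi^*$ identifies $\mathrm{CH}_0(C^{(n)})_{\mathbf{Q}}$ with the space of $S_n$-invariants, and $\gamma^{\boxtimes n}=\pi^*\delta_n$ for a class $\delta_n$ which vanishes iff $\gamma^{\boxtimes n}$ does. For $n\ge 2g-1$ the Abel map $\phi_n\colon C^{(n)}\to\Pic^n(C)$ is a projective bundle, so $\phi_{n*}$ is an isomorphism on $\mathrm{CH}_0(-)_{\mathbf{Q}}$ and $\delta_n=0$ iff $\phi_{n*}\delta_n=0$. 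Now $\phi_n\circ\pi$ sends $(x_1,\dots,x_n)$ to $\mathcal{O}_C(x_1+\dots+x_n)$, hence factors as $C^n\to(\Pic^1 C)^n\by{\nu}\Pic^n C$, where the first map is the $n$-fold power of the Abel--Jacobi morphism $j\colon x\mapsto\mathcal{O}_C(x)$ (a closed immersion for $g\ge 1$) and $\nu$ is tensor product; therefore $\phi_{n*}\delta_n=\tfrac1{n!}\,\nu_*(\eta^{\boxtimes n})$ with $\eta:=j_*\gamma$ a degree-$0$ zero-cycle on $\Pic^1 C$. Passing to $\bar k$ and translating by suitable powers of a chosen $L_0\in\Pic^1(C)(\bar k)$ turns $\Pic^n C_{\bar k}$ into $J:=\Pic^0 C_{\bar k}$ and $\nu$ into the iterated group law $m_n\colon J^n\to J$, so $\nu_*(\eta^{\boxtimes n})$ becomes, up to an isomorphism, the $n$-fold Pontryagin product $\bar\eta^{*n}$, where $\bar\eta\in\mathrm{CH}_0(J)_{\mathbf{Q}}$ lies in the augmentation ideal $I$.

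Finally I would invoke Bloch's theorem that $I$ is nilpotent --- made precise by Beauville's eigenspace decomposition, which bounds its nilpotence index by $\dim J+1$ --- to get $\bar\eta^{*n}=0$ once $n\ge g+1$. Taking $n\ge\max(g+1,2g-1)$ then yields $\gamma_{\bar k}^{\boxtimes n}=0$, hence $\gamma^{\boxtimes n}=0$ over $k$ by descent, hence $Z^{\boxtimes n}=0$. I expect the main obstacle to be the geometric core: identifying $\mathrm{CH}_0(C^{(n)})_{\mathbf{Q}}$ with $\mathrm{CH}_0(\Pic^n C)_{\mathbf{Q}}$ for $n\gg 0$ and following the Abel--Jacobi maps precisely enough to recognise the output as a Pontryagin power. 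Once this is set up, the nilpotence of $I$ is an off-the-shelf input --- although it is itself the conceptual heart of the matter.
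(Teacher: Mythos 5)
Your proposal is correct, and its first half (the Weil--Bloch reduction of an algebraically trivial cycle to a degree-zero $0$-cycle $\gamma$ on a smooth projective curve, using that external powers commute with the action of correspondences) is exactly the reduction the paper alludes to. Where you diverge is in the curve case. The argument the paper has in mind (Voevodsky--Voisin, or equivalently Kimura's $S^{2g+1}h_1(C)=0$ combined with \cite[Prop.~6.1]{kim}) gets $\gamma^{\boxtimes N}=0$ for $N\ge 2g+1$ by a direct computation with $0$-cycles on symmetric powers of $C$, i.e.\ by killing the whole symmetric power of $h_1(C)$. You instead note that $\gamma^{\boxtimes n}$, being $S_n$-invariant, is faithfully detected by its symmetrisation $\delta_n$ on $C^{(n)}$, use the projective-bundle structure of $C^{(n)}\to\Pic^n(C)$ for $n\ge 2g-1$ to detect $\delta_n$ by its pushforward to the Jacobian, and recognise that pushforward (up to the factor $1/n!$) as an $n$-fold Pontryagin power of a class in the augmentation ideal, so that Bloch's theorem $I^{*(g+1)}=0$ finishes. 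This is a genuinely different and non-circular route (Bloch's 1976 theorem is independent of all of this); it even yields the marginally sharper exponent $2g-1$ in place of $2g+1$, since you only need $\sM(\un, S^n h_1(C))=0$ rather than $S^n h_1(C)=0$. The only points to pin down in a written version are (i) that the Weil--Bloch parametrisation can be arranged with $C$ irreducible (or, alternatively, that a finite sum of smash-nilpotent cycles is smash-nilpotent, so one may treat components separately), and (ii) cohomology and base change for the Poincar\'e bundle, so that $C^{(n)}\to\Pic^n(C_{\bar k})$ is indeed the projectivisation of a vector bundle; neither is a real gap.
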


This was also proven independently by Voisin \cite[p. 267]{voi}. Their (identical) proof reduces to the case of $0$-cycles on a curve, by a classical trick due to Weil and Bloch.\footnote{Note that the statement makes sense and is true for any separated $k$-scheme of finite type, with the same proof.} For such a curve  $C$, we note that it is very close in spirit to Kimura's proof that the Chow motive $h_1(C)$ is oddly finite-dimensional \cite[Th. 4.2]{kim} (a result  originally due to Shermenev and reproven by Künnemann in \cite[Th. 3.3.1]{kunn}). Their argument is more elementary, using only $0$-cycles on symmetric powers of $C$, but actually follows from the finite dimensionality of $h_1(C)$ by \cite[Prop. 6.1]{kim}.


Voevodsky furthermore introduced the

\begin{conj}[\protect{\cite[Conj. 4.2]{voe}}]\label{c1} Any cycle numerically equivalent to $0$ is smash-nilpotent.
\end{conj}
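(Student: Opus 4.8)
\subsection*{A strategy for Conjecture \ref{c1}}

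As the discussion around Theorem \ref{t0} already hints, the natural attack is motivic and passes through finite-dimensionality. Work in the category $\Mot_\rat(k)_\Q$ of Chow motives with rational coefficients in the cohomological normalization, so that $h$ is contravariant, $h(X)^\vee\cong h(X)(d)$ for $X$ smooth projective of pure dimension $d$, and $CH^j(X)_\Q=\Hom_{\Mot_\rat(k)_\Q}(\un,h(X)(j))$. A cycle class $z\in CH^j(X)_\Q$ is then literally a morphism $z\colon\un\to h(X)(j)$, and under the isomorphism $h(X^N)=h(X)^{\otimes N}$ the external power $z^{\times N}\in CH^{jN}(X^N)_\Q$ becomes the tensor power $z^{\otimes N}$; hence $z$ is smash-nilpotent if and only if it dies in $\Mot_\tnil(k)_\Q$, i.e.\ is $\otimes$-nilpotent as a morphism. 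Likewise the trace pairing of $\Hom(\un,h(X)(j))$ with $\Hom(h(X)(j),\un)\cong CH^{d-j}(X)_\Q$ is the intersection pairing, so $z$ is numerically trivial \emph{as a cycle} precisely when it dies in the semisimple quotient $\Mot_\num(k)_\Q$ of Jannsen. Thus Conjecture \ref{c1} for $X$ reads: every morphism $\un\to h(X)(j)$ killed by $\Mot_\rat(k)_\Q\to\Mot_\num(k)_\Q$ is $\otimes$-nilpotent.

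The plan has two steps. (i) Prove that $h(X)$ --- hence every twist $h(X)(j)$ --- is finite-dimensional in the sense of Kimura and O'Sullivan: a direct sum $M^+\oplus M^-$ with $\Lambda^{m+1}M^+=0$ and $\Sym^{n+1}M^-=0$ for suitable $m,n$. (ii) Apply Kimura's theorem that a numerically trivial morphism between finite-dimensional Chow motives is $\otimes$-nilpotent (which uses Jannsen's semisimplicity of $\Mot_\num(k)_\Q$), or equivalently his implication ``all Chow motives finite-dimensional $\Rightarrow$ Conjecture \ref{c1}'', whose proof for a fixed $X$ needs only that $h(X)$ be finite-dimensional \cite{kim}. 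Since $\un$ is visibly finite-dimensional, (ii) applied to $z\colon\un\to h(X)(j)$ yields Conjecture \ref{c1} for $X$ the moment (i) is available.

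The whole difficulty, and a formidable one, is step (i): finite-dimensionality of an arbitrary Chow motive is the Kimura--O'Sullivan conjecture, at present known only for motives of \emph{abelian type} --- the $\otimes$-subcategory generated by Artin motives and abelian varieties, equivalently by the motives of curves (one uses here, as in the proof of Theorem \ref{t0}, that $h_1$ of a curve is oddly finite-dimensional) --- together with a short list of further examples, such as some Fermat and Calabi--Yau hypersurfaces, some K3 surfaces, and summands and products of these. So this route proves Conjecture \ref{c1} unconditionally only for $X$ of abelian type. Two codimensions cost nothing anyway: for $j=d$ a numerically trivial $0$-cycle has degree $0$, hence is algebraically trivial and so smash-nilpotent already by Theorem \ref{t0}; and for $j=1$ a numerically trivial divisor is, with $\Q$-coefficients, algebraically trivial (Matsusaka), so again Theorem \ref{t0} applies. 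What is genuinely open is the range $2\le j\le d-1$, exactly where Griffiths' examples make numerical triviality strictly weaker than algebraic triviality, so that the Weil--Bloch reduction underlying Theorem \ref{t0} has no analogue. Bridging that gap --- either by establishing new cases of finite-dimensionality, or, closer to the theme of these notes, by a theorem of the cube reducing higher-codimension cycles on abelian varieties (and varieties built from them) to the $1$-dimensional situation, as Ouriachi does for $1$-cycles --- is what Conjecture \ref{c1} really asks for, and I do not see how to do it in general.
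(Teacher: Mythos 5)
This statement is a conjecture: the paper does not prove it, and your text, honestly, does not claim to either --- it is a strategy outline plus a survey of known cases. Most of the survey matches the paper's introduction (dimension $0$ and codimension $1$ via Theorem \ref{t0} and Matsusaka; the genuinely open range being the intermediate one where Griffiths groups live). But your step (ii) rests on a theorem that does not exist, and this leads you to overstate what the finite-dimensionality route yields.

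Kimura's nilpotence results are: (a) a numerically trivial \emph{endo}morphism of a finite-dimensional motive is nilpotent for \emph{composition}; and (b) \cite[Prop.~6.1]{kim}: a morphism from an evenly finite-dimensional motive to an \emph{oddly} finite-dimensional one is smash-nilpotent. Neither says that a numerically trivial morphism $\un\to M$ with $M$ finite-dimensional is $\otimes$-nilpotent, and the implication ``all Chow motives finite-dimensional $\Rightarrow$ Conjecture \ref{c1}'' is not a theorem of Kimura's (nor of anyone's): the only known conditional proofs are Voevodsky's via an abelian category of mixed motives and O'Sullivan's via standard conjecture B plus BBM (Theorem \ref{tos}). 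Concretely, for $z\colon\un\to M$ numerically trivial, $\un^{\otimes N}$ is the trivial $S_N$-representation, so $z^{\otimes N}$ factors through $\Sym^N M\subseteq M^{\otimes N}$; even finite-dimensionality kills $\Lambda^{N}M^+$ for large $N$ but says nothing about $\Sym^N M^+$, so the even part of $M$ is completely untouched by this argument. This is exactly why \cite{k-s} handles only \emph{skew} cycles on abelian varieties (the components landing in the odd $h_j(A)$, where Prop.~6.1 applies), why Conjecture \ref{c1} is obtained for abelian varieties only in dimension $\le 3$ (where the even contribution is covered by Lemma \ref{l3}), and why Sebastian and Ouriachi need a separate, geometric argument for $1$-cycles on higher-dimensional abelian varieties. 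Your conclusion that ``this route proves Conjecture \ref{c1} unconditionally for $X$ of abelian type'' is therefore false: the conjecture is open already for cycles of intermediate dimension on an abelian fourfold, whose motive is finite-dimensional. The correct takeaway is that finite-dimensionality settles only the odd-parity part of the problem; the even part requires the other tools in the paper (Nori's lemma, birational motives, the theorem of the cube).
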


This conjecture immediately implies that homological e\-quivalence coincides with  numerical equivalence  for any Weil cohomology, since a smash-nilpotent cycle is clearly homologically equivalent to $0$. In \cite[Prop. 4.6]{voe}, Voevodsky shows that it would follow from the existence of an abelian category of mixed motives with suitable properties. On the other hand, O'Sullivan proved in \cite[p. 7]{os} (see also \cite[11.5.3]{andre}):

\begin{thm}\label{tos} Conjecture \ref{c1} follows from Grothendieck's standard conjecture B plus the Bloch-Beilinson--Murre (BBM) conjectures \cite{jann3}.
\end{thm}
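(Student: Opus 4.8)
The plan is to show that conjecture B together with the BBM conjectures supply exactly the mixed-motivic input needed to run Voevodsky's argument from \cite[Prop.\ 4.6]{voe}, and then to quote that proposition.

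\emph{Step 1: reduction to homologically trivial cycles.} First, conjecture B implies the Künneth standard conjecture C and the standard conjecture D, so that numerical and homological equivalence coincide on every smooth projective variety; by Jannsen's theorem the category of homological ($=$ numerical) motives is then a semisimple abelian rigid $\otimes$-category. In particular a cycle $Z\in\mathrm{CH}^i(X)_\Q$ numerically equivalent to $0$ is homologically trivial, so it suffices to prove smash-nilpotence for such $Z$. Using Theorem \ref{t0} together with the fact that the smash-nilpotent cycles form a subgroup of $\mathrm{CH}^i(X)_\Q$, one may even subtract an algebraically trivial cycle from $Z$; granting the part of BBM identifying $F^1\mathrm{CH}^i/F^2\mathrm{CH}^i$ with the image of the Abel--Jacobi map, this reduces to the case $Z\in F^2\mathrm{CH}^i(X)_\Q$, though this last simplification is inessential.

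\emph{Step 2: extracting the mixed-motivic formalism.} Next, one feeds the Chow--Künneth decompositions and the conjectural Bloch--Beilinson filtration $F^\bullet$ provided by BBM into the semisimple Tannakian category of Step 1. Following O'Sullivan's construction, this produces a $\Q$-linear Tannakian abelian category $\mathrm{MM}(k)$ of mixed motives, equipped with a weight filtration, Tate twists and realizations, such that for every $X$ one has $\mathrm{CH}^i(X)_\Q=\bigoplus_{\nu\ge 0}\Ext^\nu_{\mathrm{MM}(k)}(\un,h^{2i-\nu}(X)(i))$, with $\gr^\nu_F$ corresponding to the $\nu$-th summand, together with the weight vanishing $\Ext^\nu_{\mathrm{MM}(k)}(\un,P)=0$ for $P$ pure of weight $w$ and $\nu$ large relative to $-w$. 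This is precisely the package of hypotheses required by \cite[Prop.\ 4.6]{voe}.

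\emph{Step 3: conclusion, and the main obstacle.} Finally, invoke \cite[Prop.\ 4.6]{voe}: a homologically trivial $Z$ becomes a morphism $\un\to M(X)(i)[2i]$ in $D^b(\mathrm{MM}(k))$ whose composite with the projection onto the pure weight-$0$ quotient $h^{2i}(X)(i)$ is its cycle class, hence vanishes, so that $Z$ factors through the subobject of weights $\le -1$. Its $n$-th tensor power then factors through a complex of weights $\le -n$ on $X^n$, and the weight bounds on $\Ext$-groups in $\mathrm{MM}(k)$ force $Z^{\otimes n}=0$ in $\mathrm{CH}^{in}(X^n)_\Q$ for $n\gg 0$, i.e.\ $Z$ is smash-nilpotent. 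I expect the real work to lie in Step 2 --- genuinely producing $\mathrm{MM}(k)$, or at least the $\Ext$-description of $\gr^\bullet_F$ together with its weight vanishing, out of B and BBM alone --- and, within Step 3, in the weight bookkeeping: the $\Ext$-groups in play sit in their extremal degree, so that the vanishing of $Z^{\otimes n}$ has to be read off from the multiplicative structure of the tensor powers and not merely from a crude bound on the length of $F^\bullet$.
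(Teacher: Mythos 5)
The paper does not actually reprove this statement: it is quoted from O'Sullivan's letter \cite{os} (see also \cite[11.5.3]{andre}), and the only indication given about the argument is that it rests on Nori's lemma (Lemma \ref{lnori}). Measured against that, your proposal has a genuine gap, concentrated in Step 2. There you propose to \emph{construct} an abelian category $\mathrm{MM}(k)$ of mixed motives --- complete with Ext-descriptions of $\gr^\nu_F CH^*$ and weight vanishing of Ext groups --- out of conjecture B and BBM, and then to quote \cite[Prop. 4.6]{voe}. This reverses the known direction of implication: as the paper itself recalls, the BBM conjectures \emph{follow from} the existence of such a category \cite[Prop. 4.4 and Th. 5.2]{jann3}, and the whole point of O'Sullivan's theorem is that it replaces Voevodsky's hypothesis (existence of $\mathrm{MM}(k)$ with strong vanishing properties) by the weaker package B $+$ BBM. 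BBM is only the assertion of a finite, multiplicative, correspondence-functorial filtration $F^\bullet$ on Chow groups with $F^1=CH^*_{\hom}$ and graded pieces factoring through homological equivalence; it provides no Ext groups, no weight filtration on mixed objects, and in particular none of the Beilinson--Soul\'e-type vanishing that \cite[Prop. 4.6]{voe} needs. So Step 2 is not merely ``the real work'': it is an unproved assertion, as far as anyone knows strictly stronger than what you are allowed to assume, and your proof collapses to ``the strong hypothesis of Voevodsky's Prop.\ 4.6 implies the conclusion of Voevodsky's Prop.\ 4.6''.

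It is also worth seeing why one cannot bypass $\mathrm{MM}(k)$ by running your Step 3 directly with the BBM filtration: for $Z$ of codimension $p\ge 1$ on $X$, multiplicativity puts $Z^{\times N}\in F^N CH^{Np}(X^N)$, while BBM only guarantees $F^\nu CH^{Np}(X^N)=0$ for $\nu>Np$; since $N\le Np$, the finiteness of the filtration yields nothing as $N\to\infty$. This is exactly where the two ingredients absent from your proposal enter: Nori's lemma (Lemma \ref{lnori}) first reduces an $n$-cycle to an ambient variety of dimension $2n+1$, and conjecture B (algebraic K\"unneth projectors and Lefschetz isomorphisms) is then used, together with the fact that $\gr^\nu_F$ is supported on the K\"unneth components $h^{i_1}\otimes\dots\otimes h^{i_N}$ modulo homological equivalence, to show that no graded piece can carry $Z^{\times N}$ for $N$ large. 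A smaller inaccuracy in Step 1: B is not known to imply D outside characteristic zero, so the reduction ``numerically trivial $\Rightarrow$ homologically trivial'' should be extracted from the BBM package rather than from B alone.
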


In turn, the BBM conjectures also follow from the existence of an abelian category of mixed motives \cite[Prop. 4.4 and Th. 5.2]{jann3}.

O'Sullivan's proof rests on a well-known lemma of Nori (\cite[proof of Prop. 5.3]{nori}, \cite[Prop. 4.8]{jannsen}) that we state here because we shall use it later:

\begin{lemma}\label{lnori} If $X$ is a connected smooth projective $k$-variety of dimension $d\ge 2n+1$, then for any $\alpha \in CH_n(X)_\Q$ there is  a projective embedding $X\inj \P$, a smooth linear section $i:Y \inj X$ of dimension $2n+1$ and a $\beta\in CH_n(Y)_\Q$ such that $\alpha=i_*\beta$. If $\alpha$ is homologically trivial for a Weil cohomology verifying the weak Lefschetz theorem, so is $\beta$.
\end{lemma}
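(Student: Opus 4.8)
The plan is to obtain the section $Y$ by iterating a single hyperplane-section step $d-(2n+1)$ times, so it suffices to treat the case where we pass from $X$ of dimension $e\ge 2n+2$ to a smooth hyperplane section $Y$ of dimension $e-1$, producing from $\alpha\in CH_n(X)_\Q$ a class $\beta\in CH_n(Y)_\Q$ with $i_*\beta=\alpha$, and preserving homological triviality. First I would fix a very ample line bundle giving an embedding $X\inj\P^M$; by Bertini, a general hyperplane $H$ meets $X$ transversally, so $Y=X\cap H$ is smooth of dimension $e-1$. The point is that we have freedom in the choice of $H$ once we are willing to re-embed by a Veronese: replacing the embedding by the one attached to $\sO_X(r)$ for $r\gg 0$, the hyperplane sections of $X$ in the new embedding are exactly the divisors in $|\sO_X(r)|$, a linear system whose base locus is empty and which separates points and tangents, hence is very ample and, crucially, large enough that a general member contains any prescribed $n$-dimensional subvariety of $X$ provided $n<e-1$, i.e. $2n+1<e$, which holds since $e\ge 2n+2$.

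The heart of the argument is therefore the following elementary fact: if $W\subset X$ is a closed integral subvariety with $\dim W=n\le e-2$, then a general member $Y\in|\sO_X(r)|$ (for $r$ large) contains $W$ and is smooth. Indeed, the sublinear system of members containing $W$ is the image of $H^0(X,I_W(r))\to H^0(X,\sO_X(r))$; for $r\gg 0$ this subspace still defines a system with no base point outside $W$ and separating tangent vectors transverse to $W$ along the smooth locus of $W$, and outside $W$ one may invoke Bertini as before. A standard dimension count (the "bad locus'' — points where $Y$ is singular, or where $Y\supset W$ forces singularity — has codimension $\ge 2$ once $\operatorname{codim}_X W=e-n\ge 2$) shows the general such $Y$ is smooth. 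Writing $\alpha=\sum a_j[W_j]$ with each $\dim W_j=n$, one chooses a single $Y$ containing all the finitely many $W_j$; then each $[W_j]$ is the pushforward under $i\colon Y\inj X$ of its own cycle class $[W_j]_Y\in CH_n(Y)_\Q$ (the $W_j$ sit inside $Y$), so $\beta=\sum a_j[W_j]_Y$ satisfies $i_*\beta=\alpha$.

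For the homological statement, assume a Weil cohomology $H^*$ satisfying the weak Lefschetz theorem, and suppose the cycle class $\cl_X(\alpha)\in H^{2(e-n)}(X)$ vanishes. We must show $\cl_Y(\beta)=0$ in $H^{2(e-1-n)}(Y)$. By the projection formula $i_*\cl_Y(\beta)=\cl_X(i_*\beta)=\cl_X(\alpha)=0$, and $i_*\colon H^{2(e-1-n)}(Y)\to H^{2(e-n)}(X)$ is the Gysin map dual, via Poincaré duality on both $X$ and $Y$, to the restriction $i^*\colon H^{2n}(X)\to H^{2n}(Y)$. Weak Lefschetz says $i^*$ is an isomorphism in degrees $<e-1$ and injective in degree $e-1$; since $2n\le e-2<e-1$ here, $i^*$ is an isomorphism in degree $2n$, hence $i_*$ is an isomorphism in degree $2(e-1-n)$, and therefore $\cl_Y(\beta)=0$.

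The main obstacle I anticipate is purely bookkeeping rather than conceptual: making precise, via the correct Veronese re-embedding and a clean Bertini-with-incidence-variety argument, that one can simultaneously (i) force $Y$ to contain each of the finitely many supports $W_j$ of a representative of $\alpha$ and (ii) keep $Y$ smooth — and then checking that none of these choices of embedding interfere across the $d-(2n+1)$ iterations needed to bring the dimension down to $2n+1$. The inequality $d\ge 2n+1$ is used precisely to guarantee $\operatorname{codim}_X W_j\ge 2$ at the first step (and the codimension only grows as the dimension drops), which is exactly what lets the "bad locus'' have codimension $\ge 2$ so that a general $Y$ avoids it while still containing $W_j$; the passage from homological to the analogous statement at each stage is immediate from weak Lefschetz as above, since $2n$ stays below the relevant middle dimension throughout.
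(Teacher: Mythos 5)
The paper itself gives no proof of Lemma~\ref{lnori}: it is quoted from Nori and Jannsen. So I compare your argument with the standard proofs in those references. Your reduction to a single hyperplane-section step, and your deduction of the homological statement from weak Lefschetz plus Poincar\'e duality (using $2n<\dim Y$), are both correct.

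The gap is in your central geometric claim, that for an integral $W\subset X$ of dimension $n\le e-2$ a general member of $|\sO_X(r)|$ containing $W$ is smooth for $r\gg0$. This is false for singular $W$, and not because of a fixable dimension count: at a point $w\in W$ where the embedded tangent space $T_wW$ equals $T_wX$, \emph{every} divisor containing $W$ is singular at $w$, since its tangent space there must contain $T_wW$. Such points occur already for integral curves in fourfolds, e.g.\ the image of $t\mapsto(t^4,t^5,t^6,t^7)$ in $\A^4\subset\P^4$ has ideal contained in $\mathfrak{m}_0^2$ at the origin (no nonzero linear form vanishes on it to order $\ge 8$), so its tangent space there is all of $T_0\P^4$; this is exactly the case $n=1$, $e=4=2n+2$ of your inductive step. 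Your phrase ``separating tangent vectors transverse to $W$ along the smooth locus of $W$'' quietly restricts attention to the smooth locus, which is precisely where the problem is not. Relatedly, the role you assign to the hypothesis $d\ge 2n+1$ (``$\codim_X W_j\ge 2$'') cannot be the right one, since that would only require $d\ge n+2$.

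The standard repair uses an ingredient absent from your argument: one first replaces $\alpha$, up to rational equivalence with $\Q$-coefficients, by a cycle with \emph{smooth} support --- this is the Hironaka--Kleiman smoothing of cycles of small dimension, available in the range where $\dim X$ is large compared with $2n$, which the hypothesis $d\ge 2n+1$ guarantees (and at each cutting step one even has ambient dimension $\ge 2n+2$). Only then does one apply the Bertini theorem for linear systems with assigned base locus (Altman--Kleiman): for $Z$ smooth of dimension $n$ in a smooth ambient variety of dimension $e$, the general member of $|I_Z(r)|$, $r\gg0$, is smooth precisely because $2n<e$. This inequality, together with weak Lefschetz in degree $2n$, is where the bound $2n+1$ actually comes from. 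Since the replacement of the representative is only up to rational equivalence, the conclusion $\alpha=i_*\beta$ in $CH_n(X)_\Q$ is unaffected. Without some such moving step, the approach of forcing $Y$ to contain the given support cannot work.
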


When is Conjecture \ref{c1} known? As a consequence of Theorem \ref{t0}, it is true whenever numerical and algebraic equivalences agree, i.e. for the following types of cycles:

\begin{itemize}
\item cycles of dimension $0$;
\item cycles of codimension $1$ (Matsusaka's theorem).
\end{itemize}

The first  published example of cycles verifying Conjecture \ref{c1} without being algebraically equivalent to $0$ was given in \cite[Prop. 1]{k-s}: it holds for all ``skew cycles'' on an abelian variety.  The proof relies on \cite[Prop. 6.1]{kim} which, as we saw, can be used to prove Theorem \ref{t0}.\footnote{To explain this in motivic terms: any morphism of Chow motives $\L^i\to M$, where $\L$ is the Lefschetz motive, which factors through $h_j(A)\otimes \L^{\otimes i}$ for some abelian variety $A$ and $j$ odd, is smash-nilpotent; for $j=1$ this gives back Theorem \ref{t0}, and for general $j$ it gives \cite[Prop. 1]{k-s}.}
In particular, Conjecture \ref{c1} holds for abelian $3$-folds; this had already been shown by O'Sullivan in \cite[bot. p. 7]{os} by a different argument. I regret to have noticed it only recently.

Next, Sebastian proved Conjecture \ref{c1} in \cite{seb} for  $1$-cycles on products of curves, hence on abelian varieties, by an elaborate combinatorial argument. This argument was clarified in 2014 by Oussama Ouriachi (unfortunately unpublished), who found that it is a consequence of a ``theorem of the cube for $1$-cycles'' (Corollary \ref{c2} below).

 The main purpose of this note is to present Ouriachi's result, that we shall recover in a stronger form (Theorem \ref{t1} and Corollary \ref{c3}). I take the opportunity to list all tricks I know to obtain new cases of Voevodsky's conjecture out of old: the ``charm'' of working on this conjecture is that it encompasses a mix of categorical and geometric inputs, with no clear general picture (at least to me). This is used in Section \ref{s8} to recover several examples from the literature where Voevodsky's conjecture has been proven. Since this literature has become quite large, it would be tedious to check whether all of them can be recovered in this way (using, possibly, some nontrivial geometry); I try to formulate this precisely as a question in Section \ref{s9}.

I would like to dedicate this small text to Jacob Murre, whose own connection with the subject is clear via the BBM conjectures; Murre was also keenly interested in Theorem \ref{t0}. I thank O'Sullivan and the referee for several helpful comments.

\subsubsection*{Notation and conventions}
For simplicity, we assume the ground field $k$ algebraically closed: Conjecture \ref{c1} readily reduces to this case.
We work with effective motives with rational coefficients modulo an adequate equivalence relation $\sim$, whose category will be denoted by $\sM_\sim$. We adopt the covariant convention: the motive functor $h:\Sm^\proj\to \sM_\sim$ is covariant, and we write $M(n)=M\otimes \L^{\otimes n}$ for $M\in \sM_\sim$, where $\L$ is the Lefschetz motive. When $\sim$ is rational equivalence (Chow motives), we simply write $\sM_\rat=\sM$. If $X\in \Sm^\proj$, we write $A_\sim^n(X)= \sM_\sim(h(X),\L^n)$ (resp. $A^\sim_n(X)= \sM_\sim(\L^n,h(X))$ for the $\Q$-vector space of cycles of codimension (resp. dimension) $n$ on $X$, modulo $\sim$.

André was the first to explicitly observe that smash-nilpotence also defines an adequate equivalence: we shall denote it by $\tnil$.

\subsection{Schur finiteness and finite dimensionality} Before really starting this article, we mention an easy result.

In \cite{AK}, del Angel and Kimura extend the notions of finite dimensionality and Schur finiteness from objects to morphisms. We refer to Deligne \cite[\S 1]{dtens} for a review of Schur functors, and recall

\begin{defn}[\protect{\cite[Def. 1.10]{AK}}] Let $\sC$ be a $\Q$-linear symmetric monoidal category. A morphism $f : V \to W$ in $\sC$ is called evenly (resp. oddly) finite dimensional if $\Lambda^n(f)=0$ (resp. $S^n(f)=0$ for some $n\ge 0$; it is called finite dimensional if it is a sum of an evenly and an oddly finite dimensional morphism. The morphism $f$ is called Schur finite if $S_\lambda(f)= 0$ for some Young diagram $\lambda$.
\end{defn}

In view of the isomorphisms
\[V^{\otimes n} \simeq \bigoplus_{|\lambda|=n} S_\lambda(V)\]
where the sum  is over all Young diagrams of length $n$, a morphism $f$ is smash-nilpotent if and only if $S_\lambda(f)=0$ for \emph{all} $\lambda$ such that $|\lambda|=n$. Thus the following result is considerably weaker than Voeovdsky's conjecture for $1$-cycles.

\begin{prop} Any $n$-cycle $\alpha$ (numerically trivial or not) on a smooth projective variety $X$ is finite dimensional as a morphism in $\sM$: in fact, $\Lambda^2(\alpha)=0$.
\end{prop}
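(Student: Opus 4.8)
The plan is to observe that the source $\L^n$ of the morphism $\alpha\colon\L^n\to h(X)$ is an \emph{even} $\otimes$-invertible object of $\sM$, so that $\Lambda^2(\L^n)=0$ already; then $\Lambda^2(\alpha)$, being a morphism out of the zero object, vanishes no matter what $\alpha$ is. So the statement, although formulated for cycles, is really a formal consequence of the evenness of the Lefschetz motive.

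First I would recall that $\L$ is $\otimes$-invertible with $\End(\L)=\Q$, and that it is even: the commutativity constraint $\L\otimes\L\iso\L\otimes\L$ equals the identity. A quick way to see this: $\L\otimes\L$ is again invertible with $\End(\L\otimes\L)=\Q$, so the swap $\tau$ satisfies $\tau^{2}=\mathrm{id}$, hence $\tau=\pm\mathrm{id}$; and the sign is $+$ because $\L$ has positive rank --- from $h(\P^1)=\un\oplus\L$ one reads off $\dim\L=\chi(\P^1)-1=1$, and a $\otimes$-invertible object of dimension $+1$ is even. Consequently $\L^{n}=\L^{\otimes n}$ is even as well, being a tensor power of an even invertible object, so the swap on $\L^{n}\otimes\L^{n}$ is the identity.

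Next I would use that in any $\Q$-linear symmetric monoidal category $\Lambda^{2}V$ is the image of the idempotent $\tfrac12(\mathrm{id}_{V\otimes V}-\tau)$, which is $0$ as soon as $\tau=\mathrm{id}_{V\otimes V}$; with $V=\L^{n}$ this gives $\Lambda^{2}(\L^{n})=0$. Since $\Lambda^{2}(\alpha)$ is by construction a morphism $\Lambda^{2}(\L^{n})\to\Lambda^{2}(h(X))$ with zero source, it is the zero morphism. This proves $\Lambda^{2}(\alpha)=0$, and a fortiori that $\alpha$ is finite dimensional; the same reasoning in fact shows that every morphism out of a tensor power of $\L$, with arbitrary target, has vanishing $\Lambda^{2}$. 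There is no real obstacle in this argument: the only point worth stating carefully is the evenness of the Lefschetz motive, which is classical.
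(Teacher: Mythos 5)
Your argument is exactly the paper's: the proof there reads ``This is obvious since $\Lambda^2(\L^n)=0$,'' and your proposal simply spells out why $\L^n$ is even (an invertible object of dimension $+1$), which is the standard justification. Correct, same approach.
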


\begin{proof} This is obvious since $\Lambda^2(\L^n)=0$. (This drastic generalisation and trivialisation of my initial statement and proof were pointed out by O'Sullivan.)
\end{proof}

\subsection{Effectivity} For a smash-nilpotent cycle $\alpha$ on a smooth projective variety $X$, write $\exp(\alpha)$ for the smallest integer $N$ such that $\alpha^{\otimes(N+1)}=0$. An interesting question is the following: let $S\subseteq CH_*(X)_\Q$ be a set of smash-nilpotent algebraic cycles. Is there an integer $N>0$ such that $\exp(\alpha)\le N$ for all $\alpha\in S$? As observed in \cite[Rem. 2]{k-s}, the answer is positive for skew cycles on an abelian variety, with explicit bounds given by its Betti numbers; by the same argument, it is also positive for $\alpha\in \Pic^0(X)_\Q$ with $\exp(\alpha)\le 2\dim \Pic^0_X$.

The next result is more difficult. For $\alpha\in CH_0(X)_0$, write $h(\alpha)$ for the smallest integer $n$ such that $\alpha$ is rationally equivalent to a difference $\alpha_1-\alpha_0$, where $\alpha_0$ and $\alpha_1$ are effective $0$-cycles of degree $n$: this is the \emph{height} of $\alpha$.  Exceptionally, we don't assume $k$ algebraically closed.

\begin{prop}\label{p1} For any $n>0$, there exists an integer $c_n$ such that $h(\alpha)\le n$ $\Rightarrow$ $\exp(\alpha)\le c_n$.
\end{prop}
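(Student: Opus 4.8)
\emph{The plan} is to reduce, in two elementary steps, to the smash-nilpotence of the difference of two points on a curve, and to spend the effort there.

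First one may assume $k$ algebraically closed: for $0$-cycles with rational coefficients, rational triviality on $X^{N+1}_{\bar k}$ descends to $X^{N+1}_k$ by a transfer argument, and the height is unchanged under $\bar k/k$; then $\alpha$ is algebraically trivial, hence smash-nilpotent by Theorem \ref{t0}, so $\exp(\alpha)$ makes sense. Write $\alpha\equiv\alpha_1-\alpha_0$ with $\alpha_0,\alpha_1$ effective of degree $n$, and let $\Sigma$ be the union of their supports — at most $2n$ points. Applying Lemma \ref{lnori} after choosing a projective embedding of $X$ in which $\Sigma$ imposes independent linear conditions, and taking the iterated general hyperplane sections \emph{through} $\Sigma$, produces a smooth projective curve $Y$ with $\Sigma\subseteq Y\inj X$ and $\alpha=i_*\beta$ where $\beta=\alpha_1-\alpha_0$, now regarded on $Y$; in particular $h_Y(\beta)\le n$. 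Since $i_*$ comes from a morphism of Chow motives it is compatible with $\otimes$, so $\exp_X(\alpha)\le\exp_Y(\beta)$, and we are reduced to bounding $\exp_Y(\beta)$ \emph{uniformly in the curve $Y$}.

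Second, reduce to $n=1$. Writing $\alpha_1=\sum_{j=1}^nP_j$ and $\alpha_0=\sum_{j=1}^nQ_j$ with $P_j,Q_j\in Y(k)$ (repetitions allowed), one has $\beta=\sum_{j=1}^n\bigl([P_j]-[Q_j]\bigr)$; expanding $(\gamma+\gamma')^{\otimes(\exp\gamma+\exp\gamma'+1)}$ multinomially gives $\exp(\gamma+\gamma')\le\exp(\gamma)+\exp(\gamma')$, so
\[\exp_Y(\beta)\le\sum_{j=1}^n\exp_Y\bigl([P_j]-[Q_j]\bigr)\le n\cdot\max_j\exp_Y\bigl([P_j]-[Q_j]\bigr).\]
Thus the proposition holds with $c_n=n\,c_1$ as soon as there is an absolute constant $c_1$ with $\exp_Y([P]-[Q])\le c_1$ for every smooth projective curve $Y$ and all $P,Q\in Y(k)$.

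This last point is the crux, and the step I expect to be hard: the naive bound $\exp_Y([P]-[Q])\le 2g(Y)$ coming from $\Sym^{2g+1}h_1(Y)=0$ is useless here. I would pass to the Jacobian $J=\operatorname{Jac}(Y)$: Abel--Jacobi identifies $h_1(Y)$ with $h_1(J)$ as Chow motives, and $[P]-[Q]$ (which lies in $\sM(\un,h_1(Y))$ because its $h_0$- and $h_2$-components vanish) corresponds to a morphism $\un\to h_1(J)$. Writing $w=\mathcal O_Y(P-Q)\in J(k)$ and using $[m]_*[w]=[mw]$ with Beauville's decomposition $CH_0(J)_\Q=\bigoplus_sCH_0(J)_{(s)}$, so that $m\mapsto[mw]=\sum_sm^s\,w_{(s)}$ is polynomial, one checks — by pushing $([P]-[Q])^{\otimes m}$ along $Y^m\to Y^{(m)}\to\Pic^mY\cong J$ — that it maps, up to a translation automorphism of $h(J)$, to $m!\,w_{(m)}$. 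Hence a genus-independent bound amounts to a genus-independent vanishing $w_{(s)}=0$ for $s>c_1$, uniformly over all curves $Y$ and all points $w$ of the difference surface $Y-Y\subseteq J$. I would look for this among results on the tautological ring of a Jacobian and on modified-diagonal/Gross--Schoen cycles; the subtle and decisive points — the main obstacle — are to extract such a statement at the level of Chow groups rather than of homological or algebraic equivalence, and, crucially, to keep the bound independent of $g(Y)$.
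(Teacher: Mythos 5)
Your first two reductions are fine as far as they go (in particular the subadditivity $\exp(\gamma+\gamma')\le\exp(\gamma)+\exp(\gamma')$ via the multinomial expansion is correct), but the argument has a genuine gap exactly where you flag it: the uniform bound $\exp_Y([P]-[Q])\le c_1$ over \emph{all} smooth projective curves $Y$ is never established, and the program you sketch for it --- a genus-independent vanishing of the Beauville components $w_{(s)}$ of points of $Y-Y$ in $CH_0(\operatorname{Jac}Y)_\Q$ --- is a hard open problem, far stronger than what Proposition \ref{p1} requires. So as written this is not a proof. Note also that your appeal to Lemma \ref{lnori} with sections ``through $\Sigma$'' is not what that lemma provides, and Bertini through assigned base points is delicate (especially in positive characteristic).

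The idea you dismiss is the one that works: the bound $\exp\le 2g$ for a degree-zero $0$-cycle on a smooth curve of genus $g$ suffices, provided one arranges for the auxiliary curve carrying $\alpha_0,\alpha_1$ to have genus bounded in terms of $n$ and $X$ alone. Your construction loses this uniformity because the embedding, hence the linear-section curve and its genus, is allowed to depend on $\Sigma$. The paper instead takes the $2n$ independent Weil generic points of $X$ given by the projections $X^{2n}\to X$ over $K=k(X^{2n})$, chooses a single integral curve $C_n\subseteq X_K$ through them (Mumford's lemma), and spreads it out to a proper flat family $\sC_n\to U\subseteq X^{2n}$ with integral fibres. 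After moving $\alpha$ so that the supports of $\alpha_0,\alpha_1$ lie in $U$, the fibre over $(\alpha_0,\alpha_1)$ is an integral curve containing that support; flatness (constancy of the Hilbert polynomial) bounds the arithmetic genus of the corresponding fibre of the normalised family by a single constant $g_n$, and passing to the normalisation $C$ of that fibre only decreases the genus. Pulling $\alpha$ back to $C$ and applying the ``naive'' bound there gives $\alpha^{\otimes(2g_n+1)}=0$, i.e.\ $c_n=2g_n$. If you want to salvage your own route, you would have to fix once and for all an embedding of $X$ (depending only on $n$) for which suitable section curves through any $2n$ points exist with bounded genus --- which is essentially the same uniformity device in different clothing.
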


\begin{proof} Let $K=k(X^{2n})$; the $2n$ projections $X^{2n}\to X$ provide $2n$ rational points of $X$ over $K$ (``independent Weil generic points"). By a variant of \cite[Lemma p. 56]{mumford}, choose an integral curve $C_n$ on $X_K$ passing through these points, and let $\tilde C_n$ be its normalisation. I claim that we can take $c_n=2g_n$, where $g_n$ is the arithmetic genus of $\tilde C_n$. Indeed, $C_n$ is geometrically irreducible since it has rational points, hence we may spread it to a closed subvariety $\sC_n$ of $U\times_k X$ such that the projection $\sC_n\to U$ is (proper and) flat with geometrically integral fibres, where $U$ is a suitable open subset of $X^{2n}$ (EGA IV$_3$, Th. 12.2.4).  If $h(\alpha)\le n$, by Chow's moving lemma we may write $\alpha$ as $\alpha_1-\alpha_0$ where $\alpha_0$ and $\alpha_1$ are effective $0$-cycles of degree $n$ with support contained in $U$. The fibre of $\sC_n$ at $(\alpha_0,\alpha_1)$ is an iintegral curve $C_n(\alpha)$ on $X$ such that $\alpha_0$ and $\alpha_1$ have support on $C_n(\alpha)$. 

Write $\tilde \sC_n$ for the normalisation of $\sC_n$, and $\tilde C_n(\alpha)$ for the fibre of $\tilde \sC_n$ at $(\alpha_0,\alpha_1)$. Then $\tilde C_n\to U$ is still projective, so we can apply \cite[Cor. III.9.13]{hart} and find that the arithmetic genus of $\tilde C_n(\alpha)$ is still $g_n$. Finally, let $C$ be its normalisation  (which is smooth since $k$ is algebraically closed); its arithmetic (hence geometric) genus $g$ is $\le g_n$\footnote{If $f:E\to D$ is a finite surjective morphism of integral curves, then $\sO_D\to f_*\sO_E$ is injective with cokernel supported on a finite closed subset of $D$, hence $H^1(D,\sO_D)\to H^1(E,\sO_E)$ is surjective.}. Pulling $\alpha$ back through the finite surjective morphism $C\to \tilde C_n(\alpha)\to C_n(\alpha)$ yields a $0$-cycle $\tilde \alpha$ of degree $0$ on $C$ which maps to $\alpha$, and $\tilde\alpha^{\otimes (2g+1)}=0$ implies $\alpha^{\otimes (2g_n+1)}=0$.
\end{proof}

\begin{qn}
In Proposition \ref{p1}, can one find $c_n$ independent of $n$?
\end{qn} 

\subsection{An abstract framework} The introduction suggests an approach to Voevodsky's conjecture by induction on the dimension or codimension of the cycles considered. To make this precise,  let $\sim \ge \sim'$ be two comparable adequate equivalence relations (the inequality means that $A_\sim(X)\surj A_{\sim'}(X)$ for all $X$). For $M\in \sM_\sim$ and $n\ge 0$, consider the conditions:
\begin{align*}
\sM_\sim(\L^n,M)&\iso \sM_{\sim'}(\L^n,M)\label{V} \tag{$V(M,n)$}\\
\sM_\sim(M,\L^n)&\iso \sM_{\sim'}(M,\L^n)\label{V*} \tag{$V^*(M,n)$}.
\end{align*}

If $M=h(X)$, we simply write $V(X,n)$ and $V^*(X,n)$. The following extends observations from the introduction.

\begin{lemma}\label{l3} a) $V(M,n)$ (resp. $V^*(M,n)$) holds for $M$ and $N$ if and only if it holds for $M\oplus N$.\\
b) $V(M,n) \iff V(M(1),n+1)$ and $V^*(M,n) \iff V^*(M(1),\allowbreak n+1)$.\\
Suppose that $\sim=\tnil$ and $\sim'=\num$. Then\\
c) $V(M,0)$ is true for any $M$.\\
d) $V^*(M,1)$ is true for any $M$.\\
In particular, Conjecture \ref{c1} holds if $\dim X\le 2$.
\qed
\end{lemma}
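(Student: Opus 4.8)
The plan is to dispatch a) and b) as formal facts about $\Q$-linear symmetric monoidal categories, and to deduce c), d) and the concluding assertion from Theorem \ref{t0} (together with Matsusaka's theorem) by means of the pseudo-abelian structure of $\sM_\sim$.

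For a) I would invoke the additivity of the Hom functors: the natural map $\sM_\sim(\L^n,M\oplus N)\to\sM_{\sim'}(\L^n,M\oplus N)$ is the direct sum of the corresponding maps for $M$ and for $N$, and a direct sum of $\Q$-linear maps is bijective if and only if each summand is; the case of $V^*$ is identical. For b) I would use that tensoring with $\L$ is fully faithful on $\sM_\sim$ for every adequate equivalence $\sim$ (cancellation; for instance because $\sM_\sim$ embeds fully faithfully into the rigid category obtained by inverting $\L$, where $\otimes\L$ is an autoequivalence). Since the projection functor $\sM_\sim\to\sM_{\sim'}$ is monoidal, tensoring the defining map of $V(M,n)$ with $\mathrm{id}_\L$ produces a commutative square whose horizontal arrows are the defining maps of $V(M,n)$ and of $V(M(1),n+1)$ and whose vertical arrows are bijective; hence one horizontal arrow is an isomorphism if and only if the other is, and the same for $V^*$.

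For c) and d), since $\sM_\sim$ is pseudo-abelian, every object is a direct summand of $h(X)$ for some $X\in\Sm^\proj$, so by a) it suffices to prove $V(h(X),0)$ and $V^*(h(X),1)$ for all such $X$, and we may take $X$ connected. Now $V(h(X),0)$, resp. $V^*(h(X),1)$, asserts that the projection $A^{\tnil}_0(X)\to A^{\num}_0(X)$ from $0$-cycles, resp. $A_{\tnil}^1(X)\to A_{\num}^1(X)$ from divisors, modulo smash-nilpotence to the same modulo numerical equivalence is an isomorphism; it is surjective because $\tnil\ge\num$, so only its injectivity is at issue, i.e. one has to show that a numerically trivial $0$-cycle, resp. a numerically trivial divisor, is smash-nilpotent. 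A numerically trivial $0$-cycle on the connected variety $X$ has degree $0$, hence is algebraically equivalent to $0$ (a difference of two points is, since they lie on an irreducible curve); a numerically trivial divisor is algebraically equivalent to $0$ by Matsusaka's theorem. In both cases Theorem \ref{t0} gives smash-nilpotence.

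Finally, Conjecture \ref{c1} for a fixed $X$ of dimension $d$ is exactly the assertion that $A^{\tnil}_n(X)\to A^{\num}_n(X)$, that is $V(h(X),n)$, holds for $0\le n\le d$. For $d\le 2$ only $n=0,1,2$ occur: the case $n=0$ is c); the case $n=d$ is trivial, since $A^\sim_d(X)$ is spanned by the fundamental classes of the connected components of $X$ for every $\sim$; and the one remaining case, $d=2$ with $n=1$, is d), because on a surface dimension $1$ coincides with codimension $1$, so that $V(h(X),1)$ and $V^*(h(X),1)$ bear on one and the same map $A_{\tnil}^1(X)\to A_{\num}^1(X)$. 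I do not expect any real obstacle: all the geometric content is in Theorem \ref{t0} and Matsusaka's theorem, the lemma merely recording the base cases ($0$-cycles and divisors) in the form convenient for the induction on the dimension or codimension of cycles; the only point calling for slight care is the reduction of c) and d) to the case $M=h(X)$, which is where pseudo-abelianity and part a) enter.
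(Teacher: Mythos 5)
Your proposal is correct and is essentially the proof the paper intends: the lemma is stated with no written proof precisely because a) and b) are formal (additivity of Hom and cancellation by $\L$), while c), d) and the final assertion are the observations recorded in the introduction, namely that numerically trivial $0$-cycles have degree zero and numerically trivial divisors are algebraically trivial by Matsusaka's theorem, so both are smash-nilpotent by Theorem \ref{t0}. Your reduction to $M=h(X)$ via pseudo-abelianity and part a), and your case analysis for $\dim X\le 2$, are exactly the missing routine details.
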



\subsection{Reduction to small dimension} Take $\sim=\tnil$ and $\sim'=\hom$, for a Weil cohomology satisfying the weak Lefschetz theorem. Ouriachi observed that Lemma \ref{lnori} reduces $V(-,n)$ to the case of smooth projective varieties of dimension $\le 2n+1$. This does not apply to Conjecture \ref{c1} unless one knows that homological and numerical equivalences coincide; this is the case for $n=1$ in characteristic $0$ by  \cite[Cor. 1]{lieb}. Let us give a proof in any characteristic. First a lemma:

\begin{lemma}\label{l2} Let $X$ be smooth projective of dimension $\ge 3$ and let $i:Y\subset X$ be a smooth hyperplane section. Then the restriction map on divisors modulo numerical equivalence is injective and its cokernel is killed by a power of $p$, where $p$ is the exponential characteristic of $k$.
\end{lemma}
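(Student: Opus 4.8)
The plan is to prove the two assertions --- injectivity of $i^*$ on $\NS(X)_\Q$ (equivalently $\NS(Y)_\Q$, since for divisors numerical and homological equivalence agree after $\otimes\Q$ by Matsusaka's theorem, and one can even work integrally up to bounded torsion) and the control of the cokernel --- by combining the weak and hard Lefschetz theorems with the standard comparison between $\NS$ and $H^2$. Concretely, I would first reduce to the case $\dim X = 3$ by a Bertini/iteration argument: if $\dim X \ge 4$, take a chain of smooth hyperplane sections $X \supset X_1 \supset \dots$ down to dimension $3$; injectivity and cokernel-control compose, so it suffices to treat one step with $\dim X = 3$, $\dim Y = 2$. (One must note that a general hyperplane section of a smooth projective variety over an algebraically closed field is smooth --- Bertini --- and that we may assume $X$, hence $Y$, geometrically connected; here $k$ is \emph{not} assumed algebraically closed, so I would pass to $\bar k$, prove the statement there, and descend, the Galois action only affecting things by bounded torsion which is harmless after inverting $p$.)

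For $\dim X = 3$, $\dim Y = 2$: use an $\ell$-adic cohomology $H^*$ with $\ell \ne p$, for which the weak Lefschetz theorem holds (this is the hypothesis we are allowed to invoke). The restriction $i^*\colon H^2(X,\Q_\ell(1)) \to H^2(Y,\Q_\ell(1))$ is injective by weak Lefschetz (since $H^2(X) \to H^2(Y)$ is injective when $\dim Y \ge \dim X - 1$ with the relevant range; more precisely weak Lefschetz gives injectivity of $H^i(X) \to H^i(Y)$ for $i < \dim Y$ and we are at $i = 2 = \dim Y$, so one instead argues via the Gysin sequence or directly via hard Lefschetz: the composite $H^2(X) \xrightarrow{i^*} H^2(Y) \xrightarrow{i_*} H^4(X) \cong H^2(X)^\vee$-type map is cup product with the hyperplane class, which is an isomorphism by hard Lefschetz since $\dim X = 3$ and $2 + 2 = 4 = 2\dim X - 2$... let me instead use: $i_* i^* = \cup[Y]$ on $H^2(X)$, and $[Y]$ is the hyperplane class $h$; by hard Lefschetz $\cup h\colon H^2(X,\Q_\ell(1)) \to H^4(X,\Q_\ell(2))$ is an isomorphism, so $i^*$ is injective). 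Since $\NS(X)_{\Q_\ell} \hookrightarrow H^2(X,\Q_\ell(1))$ compatibly with $i^*$, injectivity on $\NS$ modulo numerical equivalence follows, and a power of $p$ is not even needed on this half.

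For the cokernel: I want to show $\NS(Y)_\Q / i^*\NS(X)_\Q$ is killed by a power of $p$; since these are $\Q$-vector spaces the honest statement is that $i^*$ is surjective after $\otimes\Q$, i.e. $\NS(Y)_\Q = i^*\NS(X)_\Q$, the ``$p$-power'' phrasing being the integral refinement which I would get by chasing denominators. The key input is the Noether--Lefschetz-type fact in the opposite direction: here $\dim Y = 2$, so \emph{every} class in $H^2(Y,\Q_\ell(1))$ that is ``of geometric origin'' restricts from $X$ --- more usefully, I would use that $i^*\colon H^2(X,\Q_\ell(1)) \to H^2(Y,\Q_\ell(1))$ is an isomorphism onto the part of $H^2(Y)$ fixed by monodromy, but for a single hyperplane section what we actually have is injectivity, not surjectivity, and the cokernel of $i^*$ on $H^2$ can be large (the primitive part). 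So the real argument for the cokernel must be: a divisor class $D$ on $Y$ pushed forward gives $i_* D \in \NS(X)_\Q$ (as $\dim X = 3$, $i_*D$ is a divisor on $X$), and then $i^* i_* D = D \cdot Y|_Y = D \cdot (h|_Y)$ inside $\NS(Y)_\Q$. This shows $h|_Y \cdot D \in i^*\NS(X)_\Q$ for all $D$, i.e. multiplication by $h|_Y$ lands in the image; combined with hard Lefschetz on $Y$ --- $\cup (h|_Y)\colon H^0 \to H^2$ surjects only onto the span of $h|_Y$, which is not enough --- hmm, this only recovers the class $h|_Y$ itself. Therefore \textbf{the main obstacle}, and where the argument genuinely uses $\dim X \ge 3$ and the structure of Lefschetz pencils, is establishing surjectivity of the cokernel: I would deduce it from the fact that for $\dim X = 3$ the Gysin/restriction sequence around a Lefschetz pencil identifies the \emph{invariant} cycles, together with the global invariant cycle theorem (or its positive-characteristic analogue via Deligne's work), to conclude that every divisor class on $Y$ is, up to a bounded power of $p$, invariant under the monodromy of a pencil containing $Y$, hence extends to $X$. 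This descent from ``invariant cohomology class'' to ``actual divisor on $X$'' is where numerical-equals-homological for divisors (Matsusaka) is used again, and where the power of $p$ enters --- through the imperfection of the cycle class map over non-closed or positive-characteristic situations and through torsion in the comparison of $\NS$ with $H^2$. I expect the write-up to invoke a reference (e.g. SGA 7, or Kleiman's article on the standard conjectures, or \cite{lieb}) for this last point rather than reprove it.
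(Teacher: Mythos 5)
Your injectivity argument (weak Lefschetz for $\ell$-adic cohomology combined with Matsusaka's theorem) is exactly the paper's. The rest of the proposal, however, has a fatal problem: your reduction to $\dim X=3$, $\dim Y=2$ lands you in precisely the case where the cokernel assertion is \emph{false}. For instance, let $X\subset \P^4$ be a smooth quadric threefold and $Y=X\cap H\cong \P^1\times\P^1$ a smooth hyperplane section: then divisors modulo numerical equivalence give $\Z$ for $X$ and $\Z^2$ for $Y$, and the cokernel of $i^*$ is free of rank $1$, killed by no integer. (This is the Noether--Lefschetz phenomenon you half-noticed when you observed that $i^*$ need not surject onto $H^2$ of a surface section.) So the ``main obstacle'' you flag at the end is not one that a cleverer use of Lefschetz pencils or the invariant cycle theorem could remove; the surjectivity you are after genuinely fails when $\dim Y=2$. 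The lemma is only invoked in the proof of Theorem \ref{t5} for hyperplane sections of dimension $\ge 3$ (i.e.\ $\dim X\ge 4$), and its hypothesis should be read accordingly; your reduction step points in exactly the wrong direction. (Also, $k$ is assumed algebraically closed throughout the paper, so the descent discussion is moot.)

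The paper's treatment of the cokernel is formal-geometric rather than monodromy-theoretic, and this is where $\dim Y\ge 3$ enters (through vanishing of $H^1$ and $H^2$ of $\sO_Y(-n)$ for $n\gg 0$). In characteristic $0$ everything is quoted from SGA~2, Exp.~XII (Cor.~4.9~b) and Rem.~4.10). In characteristic $p>0$, SGA~2 still gives that $\Pic(X)\to\Pic(Y_m)$ is bijective for a suitable infinitesimal thickening $Y_m$ of $Y$ with $m\gg0$; it then remains to show that $\Coker\bigl(\Pic(Y_m)\to\Pic(Y)\bigr)$ is killed by a power of $p$, which holds because the kernel of the map of étale unit sheaves of $Y_m$ and $Y$ is of the form $1+I$ with $I$ nilpotent, hence a sheaf of exponent $p^r$, and one concludes with the long exact cohomology sequence. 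This elementary observation about unit sheaves on thickenings, not torsion in cycle class maps or imperfections of the base field, is the source of the power of $p$ in the statement.
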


\begin{proof} If $p=1$, this follows from \cite[Cor. 4.9 b) and Rem. 4.10]{SGA2}. Let us prove it when $p>1$ (this issue was raised in \cite[Proof of Th. 2]{essential}).\footnote{I thank Yves Laszlo for a discussion leading to this argument.} Using Weak Lefschetz for $l$-adic cohomology ($l\ne p$) and Matsusaka's theorem \cite{matsusaka}, we get the injectivity. Then, \cite[Cor. 4.9 b)]{SGA2} shows that $\Pic(X)\to \Pic(Y_m)$ is bijective for $m$ large enough, where $Y_m$ is a suitable infinitesimal thickening of $Y$. It remains to show that $\Coker(\Pic(Y_m)\by{f^*} \Pic(Y))$ is killed by a power of $p$, where $f:Y\inj Y_m$ is the closed immersion,. Let $G$ (resp. $G(m)$) be the étale sheaf of units of $Y$ (resp. of $Y_m$).  Then $f^* G(m)$ is an extension of $G$ by a sheaf of exponent $p^r$ for some $r\ge 0$, and we conclude with the long cohomology exact sequence.
\end{proof}

\begin{thm}\label{t5} Take $\sim=\tnil$ and $\sim'=\num$. Let $X\in \Sm^\proj$ be of dimension $\ge 3$. If $V(T,1)$ holds for all smooth threefold sections $T$ of $X$ by linear subspaces of all projective embeddings $X\inj \P$, then $V(X,1)$ holds. In particular, 
$V(M,1)$ holds for all $M\in \sM$ if and only if it holds for $h(T)$, for all $3$-dimensional $T\in \Sm^\proj$.
\end{thm}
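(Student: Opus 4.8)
The plan is to reduce $V(X,1)$ to a statement about threefolds via Nori's lemma (Lemma \ref{lnori}), bridging the gap between numerical and homological equivalence for $1$-cycles along the way using Lemma \ref{l2}. Recall from the paragraph preceding the theorem that Lemma \ref{lnori} reduces $V(-,1)$ \emph{with $\sim'=\hom$} (for an $l$-adic cohomology satisfying weak Lefschetz, $l\ne p$) to threefold linear sections; so the real content is to upgrade this to $\sim'=\num$, i.e. to show that for $1$-cycles on threefolds, numerical triviality is equivalent to $l$-adic homological triviality in any characteristic.

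\textbf{Step 1: the case of threefolds.} First I would establish that for $T\in\Sm^\proj$ of dimension $3$, a $1$-cycle on $T$ is numerically trivial if and only if it is $l$-adically homologically trivial. By hard Lefschetz / Poincaré duality considerations this is the same as the corresponding statement for divisors on $T$, which is Matsusaka's theorem \cite{matsusaka}: numerical and $l$-adic homological equivalence agree for divisors. (More precisely: a $1$-cycle $\alpha$ is $\hom$-trivial iff its class in $H^4_{\text{\'et}}(T,\Q_l(2))$ vanishes; pairing against $H^2$ and using that $H^2$ is spanned by divisor classes modulo torsion — again Matsusaka — one gets that $\hom$-triviality is detected by intersection with divisors, which is numerical triviality.) Hence for $3$-folds, $V(T,1)$ with $\sim'=\num$ is \emph{equivalent} to $V(T,1)$ with $\sim'=\hom$.

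\textbf{Step 2: descent from $X$ to its threefold sections.} Now let $\dim X\ge 3$ and assume $V(T,1)$ holds (with $\sim'=\num$, equivalently, by Step 1, with $\sim'=\hom$) for all smooth threefold linear sections $T\subseteq X$ in all projective embeddings. Let $\alpha\in A^\num_1(X)$ be a $\num$-trivial $1$-cycle; I must show $\alpha$ is $\tnil$-trivial. By weak Lefschetz for $l$-adic cohomology, a $\num$-trivial $1$-cycle on $X$ is $\hom$-trivial (this is where $\dim X\ge 3$ and weak Lefschetz intervene, via the same mechanism as Lemma \ref{l2} applied iteratively: $\num = \hom$ for divisors, hence, dually, for $1$-cycles on any $X$ with $\dim X\ge 3$ — indeed $H^{2d-2}(X)$ is dual to $H^2(X)$, spanned by algebraic classes by Matsusaka again, so numerical and homological triviality of $1$-cycles coincide on \emph{any} $X$). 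So $\alpha$ is $\hom$-trivial. Apply Lemma \ref{lnori} with $n=1$: there is a projective embedding $X\inj\P$, a smooth linear section $i:Y\inj X$ of dimension $3$, and $\beta\in CH_1(Y)_\Q$ with $\alpha=i_*\beta$ and $\beta$ $\hom$-trivial. By Step 1, $\beta$ is then $\num$-trivial on the threefold $Y$, so by hypothesis $V(Y,1)$, $\beta$ is $\tnil$-trivial; pushing forward, $\alpha=i_*\beta$ is $\tnil$-trivial. This proves $V(X,1)$. Finally, Lemma \ref{l3}(a) (applied to a direct sum decomposition $h(X')=\mathbf 1 \oplus h_1(X')\oplus\dots$ or more simply using that every $M\in\sM$ is a summand of some $h(X)$ up to Tate twist, combined with Lemma \ref{l3}(b)) gives the statement for all motives $M$.

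\textbf{Main obstacle.} The genuinely delicate point is Step 1 — the coincidence of $\num$ and $\hom$ (for $l$-adic cohomology, $l\ne p$) for $1$-cycles on threefolds in positive characteristic, where one cannot invoke \cite{lieb}. Everything hinges on reducing it to Matsusaka's theorem for divisors via duality, and on being careful that the weak Lefschetz theorem and hard Lefschetz are available $l$-adically; the $p$-power torsion issues visible in Lemma \ref{l2} are harmless here because we work with $\Q$-coefficients throughout. The descent in Step 2 is then formal given Lemma \ref{lnori}.
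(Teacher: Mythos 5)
Your overall strategy --- pass to homological equivalence, apply Nori's lemma, then come back to numerical equivalence on the threefold --- hinges entirely on Step 1, and Step 1 as written is wrong. The assertion that ``$H^2$ is spanned by divisor classes modulo torsion --- again Matsusaka'' is not Matsusaka's theorem and is false: Matsusaka says that numerical and algebraic equivalence coincide for \emph{divisors}, i.e.\ that the \emph{left} kernel of the pairing $\NS(T)_{\Q_l}\times(\text{curve classes in }H^4)\to\Q_l$ vanishes; it says nothing about $H^2$ being algebraic (already over $\C$ this fails for any abelian threefold, where $\dim H^2=15$ but $\rk\NS\le 9$). What you actually need is the vanishing of the \emph{right} kernel of that pairing, i.e.\ that a $1$-cycle pairing to zero with all divisors has zero class in $H^4$. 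That statement is exactly the coincidence of $\num$ and $\hom$ for $1$-cycles; it is Lieberman's theorem in characteristic $0$ (proved via the Hodge decomposition and Lefschetz $(1,1)$, not via duality and Matsusaka), and in positive characteristic it is an open instance of the standard conjectures. The same false claim reappears in Step 2 (``$H^{2d-2}(X)$ is dual to $H^2(X)$, spanned by algebraic classes by Matsusaka''). So your argument proves the theorem only in characteristic $0$, and even there only after replacing your justification of Step 1 by a citation of Lieberman; the paper explicitly flags this as the obstacle it is designed to get around.

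The paper's proof avoids homological equivalence altogether. It uses only the existence half of Lemma \ref{lnori} (valid for \emph{any} $1$-cycle, no triviality hypothesis) to write $Z=i_*Z'$ with $Z'\in CH_1(T)_\Q$, and then corrects $Z'$: using Lemma \ref{l2} to identify $A^1_\num(X)_\Q$ with $A^1_\num(T)_\Q$ under restriction, one builds compatible projectors $p_X=\sum D_i\otimes C_i$ and $p_T=\sum D_i'\otimes C_i'$ with $i_*\circ p_T=p_X\circ i_*$, so that $Z=(1-p_X)Z=i_*(1-p_T)Z'$ with $(1-p_T)Z'$ numerically trivial on $T$ (numerical triviality of $1$-cycles on a threefold being detected by divisors). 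This is where the real content lies, and it is entirely absent from your proposal; Lemma \ref{l2} enters not to compare $\num$ with $\hom$ but to match up Néron--Severi bases of $X$ and $T$.
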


(Of course this is optimal, since surfaces have trivial Griffiths groups.)

\begin{proof} Let $Z$ be a $1$-cycle on $X$. By Lemma \ref{lnori}, one may find  $T\subseteq X$ as in the statement such that $Z$ is rationally equivalent to a $1$-cycle $Z'$ with support in $T$; the point is to show that, if $Z$ is numerically equivalent to $0$, we can choose $Z'$ numerically equivalent to $0$. Let $p_X$ (resp. $p_T$) be a projector on $X$ (resp. on $T$) as in \cite[\S 2]{essential}:  I claim that we can choose $p_X$ and $p_T$ ``compatible''. Namely, choose a lift $(D_1,\dots D_r)$ in $CH^1(X)$ of a basis  of $A^1_\num(X)$, whence by Lemma \ref{l2} a lift $(D'_1,\dots D'_r)$ in $CH^1(T)$ of the corresponding basis  of $A^1_\num(T)$, with $D'_i=i^*D_i$. If $(C'_1,\dots, C'_r)$ is a lift in $CH_1(T)$ of the dual basis of $A_1^\num(T)$, then $(C_1,\dots, C_r)$ is a lift in $CH_1(X)$ of the dual basis of $A_1^\num(X)$, with $C_i =i_* C'_i$. Now the projectors
\[p_X= \sum D_i\otimes C_i\in CH^{\dim X}(X\times X), \quad p_T = \sum D'_i\otimes C'_i\in CH^{3}(T\times T)\]
verify $i_*\circ p_T = p_X\circ i_*$.

If $Z$  is numerically equivalent to $0$, then 
\[Z=(1-p_X)Z=i_*(1-p_T)Z'\] 
where $(1-p_T)Z'$ is numerically equivalent to $0$, so we are done.
\end{proof}

\begin{rque} Suppose that a threefold $T$ has a Chow-Künneth decomposition verifying the lift of the standard conjecture B to rational equivalence given in \cite[Hyp. 7.1]{tunis}; this happens in several cases, namely abelian threefolds, products of a curve and a surface, complete intersections -- see \cite[Prop. 7.2]{tunis}. By loc. cit., Th. 7.7, we have an isomorphism
\[\Griff(T)=\Ker(A_1^\alg(T)\to A_1^\num(T)) = \sM_\alg(\L,t_3(T))\]
where $\Griff(T)$ is the (numerical) Griffiths group of $T$ and $t_3(T)$, a direct summand of $h_3(T)$, is the ``transcendental part'' of $h(T)$. Thus Conjecture \ref{c1} for $T$ would follow from the finite dimensionality of $t_3(T)$ -- which is known only when $T$ is of abelian type (see \S \ref{s5.1})\dots\ In particular, to the best of my knowledge Conjecture \ref{c1} remains open even for the product of a curve and a general surface or for a general hypersurface (see also the questions in Section \ref{s9}).
\end{rque}

\subsection{An unconditional version of Ouriachi's theorem} 
Let us say that a motive $M\in \sM_\sim$ is \emph{reduced} if $\sM_\sim(\un,M)=0$. If $M=h(X)$ for $X\in \Sm^\proj$ connected, the choice of a rational point yields a splitting $h(X)\simeq \un \oplus h^+(X)$ where $h^+(X)$ is reduced. This extends to any $M\in \sM$, yielding functorial split exact sequences
\[0\to M^+\to M\to a(M)\to 0\]
where $a(M)$ is an Artin motive, i.e. (since $k$ is algebraically closed) a sum of copies of $\un$ (see \cite[Prop. 2.2]{tunis}).

\begin{lemma}\label{l1} If $\sim\le \alg$, one has $\sM_\sim(\un, M^+)=0$ for any $M$; hence the decomposition $M\simeq M^+ \oplus a(M)$ is unique and functorial.
\end{lemma}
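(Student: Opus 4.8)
The plan is to reduce the vanishing $\sM_\sim(\un,M^+)=0$ to a statement about zero-cycles on a variety, and then exploit the fact that, modulo algebraic equivalence (hence modulo any $\sim\le\alg$), a zero-cycle of degree $0$ on a connected smooth projective variety that admits a rational point dies. First I would recall the construction of the splitting $h(X)\simeq \un\oplus h^+(X)$: a rational point $x\in X(k)$ gives morphisms $\un\by{x_*} h(X)\by{p} \un$ with $p\circ x_* = \mathrm{id}$, and $h^+(X):=\Ker(p)$, with the complementary projector $\pi^+ = \mathrm{id}_{h(X)} - x_*\circ p$. Then $\sM_\sim(\un,h^+(X))$ is the kernel of $p_*:\sM_\sim(\un,h(X))\to\sM_\sim(\un,\un)=\Q$, i.e. the group $A^\sim_0(X)_{\deg 0}$ of degree-$0$ zero-cycles modulo $\sim$.

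Next I would observe that for $\sim\le\alg$ this group vanishes: any degree-$0$ zero-cycle on a connected smooth projective $X$ is algebraically equivalent to $0$ (choose a point in each component; since $X$ is connected and $k=\bar k$, any two closed points are algebraically equivalent, so $\sum n_i[x_i]$ with $\sum n_i=0$ is algebraically equivalent to $0$), hence is $\sim$-trivial because $\sim\le\alg$ means $A_\sim(X)\surj A_\alg(X)$ — more precisely the composite $A_\sim\surj A_\alg$ kills it, but one needs the identification that the kernel of $A^\sim_0(X)_{\deg 0}$ is exactly what maps to $0$; cleanly, $\sim\le\alg$ gives a surjection $A^\sim_0(X)\surj A^\alg_0(X)\cong\Z$ (degree), whose kernel is what we are computing, and the degree map already factors through $A^\sim_0(X)$, so the kernel is the whole of $A^\sim_0(X)_{\deg 0}$, which I must show is zero. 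This is where one genuinely uses $\sim\le\alg$: rational equivalence would \emph{not} suffice (Chow groups of $0$-cycles are huge). So the vanishing $A^\sim_0(X)_{\deg 0}=0$ holds precisely because algebraic equivalence collapses all degree-$0$ zero-cycles on a connected variety with a rational point.

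For a general $M\in\sM$ rather than $M=h(X)$, I would pass through the functorial exact sequence $0\to M^+\to M\to a(M)\to 0$ already quoted from \cite[Prop. 2.2]{tunis}: since $a(M)$ is a sum of copies of $\un$, the decomposition splits, and $\sM_\sim(\un,M^+)$ is a direct summand of $\sM_\sim(\un,M)$. Writing $M$ as a direct summand of some $h(X)(-r)$ (twist by a power of $\L$; such $X$ exists since $\sM$ is generated by motives of smooth projective varieties and their Tate twists), and using $\sM_\sim(\un,h(X)(-r))$ — which by Lemma \ref{l3}(b)-type reindexing is a space of cycles on $X$ — one reduces to the zero-cycle case above, the point being that $M^+$ is a summand of $h^+(X)(-r)$ compatibly with the splittings. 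Once $\sM_\sim(\un,M^+)=0$, uniqueness and functoriality of $M\simeq M^+\oplus a(M)$ are formal: any two complements of $a(M)$-type summands differ by a map $a(M)\to M^+$ or $M^+\to a(M)$, and $\Hom(\un,M^+)=0$ forces the relevant components to vanish (dually one also wants $\sM_\sim(M^+,\un)=0$, which follows by the same argument applied to the dual, or is simply not needed if one only claims uniqueness of the sub $M^+$). The main obstacle, such as it is, is purely bookkeeping: making precise the identification $\sM_\sim(\un,h^+(X))=A^\sim_0(X)_{\deg 0}$ and the compatibility of the motivic splitting with the geometric "degree-$0$ cycles" picture; the mathematical content — degree-$0$ zero-cycles on a connected variety with a rational point are algebraically trivial — is elementary.
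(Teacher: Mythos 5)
Your proposal is correct and follows essentially the same route as the paper's (very terse) proof: reduce to $M=h(X)$ with $X$ connected, where the claim becomes the isomorphism $A_0^\sim(X)\simeq\Q$ via degree, which holds because degree-zero zero-cycles on a connected smooth projective variety over $k=\bar k$ are algebraically equivalent to zero and $\sim$ is coarser than $\alg$. One notational slip: in the middle you assert that $\sim\le\alg$ gives a surjection $A^\sim_0(X)\surj A^\alg_0(X)$, whereas with the paper's convention the surjection goes the other way, $A^\alg_0(X)\surj A^\sim_0(X)$ (and the lemma would be false for the finer relation, e.g.\ rational equivalence, as you yourself note); since your opening sentence and your actual argument use the correct direction, this is an inversion of notation rather than a gap.
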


\begin{proof} We reduce to $M=h(X)$ with $X$ connected; this is then equivalent to the isomorphism $A_0^\alg(X)=\Q$.
\end{proof}

\begin{thm}\label{t1} Let $M_1,M_2,M_3,N\in \sM$. Then
\begin{align*}
\sM_\tnil(\L,N\otimes M^+_1\otimes M^+_2 \otimes M^+_3)&=0.
\end{align*}
\end{thm}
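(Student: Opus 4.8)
The plan is to reduce the statement to the smash‑nilpotence of certain $1$‑cycles on a product of smooth projective curves, and there to combine Kimura's finite‑dimensionality results with Theorem \ref{t0}.

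\emph{Reductions.} Since $\sM_\tnil(\L,-)$ is additive in its argument and every effective Chow motive is a direct summand of $h(X)$ for some $X\in\Sm^\proj$, we may assume $M_i=h(X_i)$ with $X_i$ smooth projective connected and $N=h(Y)$. Choosing rational points and writing $h(X_i)=\un\oplus h^+(X_i)$, the group in question becomes the image in $A^\tnil_1(W)$, $W:=Y\times X_1\times X_2\times X_3$, of the idempotent $\mathrm{id}_{h(Y)}\otimes q_1\otimes q_2\otimes q_3$, where $q_i\in\Corr^0(X_i,X_i)$ cuts out $h^+(X_i)$. Thus we must show that a $1$‑cycle $\gamma$ on $W$ is smash‑nilpotent whenever it is ``triply primitive'' in the three $X_i$‑directions — the motivic form of the hypotheses of the classical theorem of the cube.

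\emph{Reduction to curves.} Such a $\gamma$ is supported on a curve, and the images of that curve under the four projections of $W$ are again curves (or points); treating the irreducible components of $\gamma$ separately, pulling them back through the relevant normalisations (as in the proof of Proposition \ref{p1}), and collecting the ``axes'' into disjoint unions of smooth curves, additivity reduces us to $W=D_Y\times D_1\times D_2\times D_3$ with $D_\bullet$ smooth projective curves. Choosing the base points on the curves, one checks — more simply than in the proof of Theorem \ref{t5} — that the three $h^+$‑projectors are compatible with the inclusions $D_i\hookrightarrow X_i$, so triple primitivity is preserved; the divisorial and transcendental summands of the $h^+(X_i)$ contribute nothing to begin with, because $\iota_*h(D_i)$ meets them only through Tate twists or through the $h_0$–$h_1$ range. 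Now $h^+(D)=h_1(D)\oplus\L$ and $h(D_Y)=\un\oplus h_1(D_Y)\oplus\L$; expanding $h(D_Y)\otimes h^+(D_1)\otimes h^+(D_2)\otimes h^+(D_3)$ and applying $\sM_\tnil(\L,-)$ to each summand, every summand carrying at least two Lefschetz factors vanishes since $\sM(\L,\L^{\otimes r}\otimes P)=0$ for $r\ge 2$ and $P$ effective, and every summand carrying exactly one Lefschetz factor yields $\sM_\tnil(\un,-)$ of a direct summand of some $h^+(Z)$, hence $0$ by Lemma \ref{l1} (equivalently, a degree‑$0$ $0$‑cycle, smash‑nilpotent by Theorem \ref{t0}). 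Only two summands remain: $h_1(D_1)\otimes h_1(D_2)\otimes h_1(D_3)$ and $h_1(D_1)\otimes h_1(D_2)\otimes h_1(D_3)\otimes h_1(D_Y)$.

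\emph{Conclusion, and the main obstacle.} The summand $h_1(D_1)\otimes h_1(D_2)\otimes h_1(D_3)$ is oddly finite‑dimensional, being a threefold tensor product of oddly finite‑dimensional motives \cite{kim,kunn}, so any morphism from $\L$ factoring through it is smash‑nilpotent by \cite[Prop.~6.1]{kim}; this disposes of it. The remaining summand $h_1(D_1)\otimes h_1(D_2)\otimes h_1(D_3)\otimes h_1(D_Y)$ — a fourfold tensor product of $h_1$'s of curves — is only \emph{evenly} finite‑dimensional, so \cite[Prop.~6.1]{kim} does not apply to it directly, and this is exactly where I expect the main difficulty to lie: it is the ``theorem of the cube for $1$‑cycles'', and the extra content over \cite[Prop.~6.1]{kim} and \cite[Prop.~1]{k-s} is precisely the presence of the fourth curve (equivalently, of the motive $N$). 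Here I would argue directly with symmetric powers of the $D_i$ — in the spirit of the Weil--Bloch proof of Theorem \ref{t0} and of finite‑dimensionality, using $S^{r}h_1(D_i)=0$ for $r$ large — to show that $\gamma^{\otimes N}$ dies on $W^N$ for $N\gg 0$; this is a motivic streamlining of Sebastian's combinatorial argument \cite{seb}. Assembling the pieces and running the reductions backwards then gives the theorem, and Corollaries \ref{c2} and \ref{c3} follow formally.
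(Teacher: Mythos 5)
Your reductions, and your treatment of the summand $h_1(D_1)\otimes h_1(D_2)\otimes h_1(D_3)$ via odd finite-dimensionality and \cite[Prop.~6.1]{kim}, agree with the paper's argument for the case $N=\un$. But there is a genuine gap at exactly the point you flag: the summand $h_1(D_Y)\otimes h_1(D_1)\otimes h_1(D_2)\otimes h_1(D_3)$. A fourfold tensor product of $h_1$'s is \emph{evenly} finite-dimensional, so \cite[Prop.~6.1]{kim} gives nothing, and ``arguing directly with symmetric powers \dots\ as a motivic streamlining of Sebastian's combinatorial argument'' is not a proof but a restatement of the problem: showing that $1$-cycles lying in the $(1,1,1,1)$-K\"unneth component of a product of four curves are smash-nilpotent is precisely the content of the theorem of the cube, and precisely what Sebastian's elaborate combinatorics was invented for. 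As written, your argument establishes only the case $N=\un$.

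The missing idea is the elementary isomorphism $(N\otimes M_1)^+\simeq N^+\oplus N\otimes M_1^+$ (check it for $N=h(X)$, $M_1=h(Y)$ with chosen base points). It exhibits $N\otimes M_1^+\otimes M_2^+\otimes M_3^+$ as a direct summand of $(N\otimes M_1)^+\otimes M_2^+\otimes M_3^+$, i.e.\ one absorbs $N$ into $M_1$ and reduces to $N=\un$ \emph{before} passing to curves. After that, a $1$-cycle supported on a curve $C\subset X_1\times X_2\times X_3$ factors, via the normalisation $\tilde C$ and the diagonal $\tilde C\to\tilde C^{3}$, through $h^+(\tilde C)^{\otimes 3}$ (here one should use the functoriality of $M\mapsto M^+$ from Lemma~\ref{l1} rather than a projector-compatibility check: the maps $\tilde C\to X_i$ need not be embeddings, so your four curves $D_\bullet$ should all be taken equal to the single curve $\tilde C$). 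One then only ever meets the \emph{threefold} power $h_1(\tilde C)^{\otimes 3}$, which is oddly finite-dimensional; the fourth $h_1$ never appears. So the difficulty you isolate is real but avoidable, and avoiding it is exactly what the formulation with the arbitrary extra tensor factor $N$ buys.
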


\begin{proof}  We have a basic isomorphism, obtained by reducing to the case $N=h(X)$, $M_1=h(Y)$:
\begin{equation}\label{eq1}
(N\otimes M_1)^+\simeq N^+ \oplus N\otimes M_1^+.
\end{equation}

It reduces us to the case $N=1$. We further reduce to $M_i=h(X_i)$ for $X_i\in \Sm^\proj$.

Write $X=X_1\times X_2 \times X_3$. Any cycle in $\sM_\tnil(\L,h^+(X_1)\otimes h^+(X_2) \otimes h^+(X_3)$ is the image of a cycle in $\sM_\tnil(\L,h(X))=A_1^\tnil(X)$ under the projection $h(X)=h(X_1)\otimes  h(X_2) \otimes h(X_3)\to h^+(X_1)\otimes  h^+(X_2) \otimes h^+(X_3)$. Thus, it suffices to prove the result for the image of any $[C]$, where $C$ is a curve traced on $X$.

Let $\tilde C$ be the normalisation of $C$. The morphism $\tilde C\to C\to X$ factors through a morphism
\[\tilde C\by{\Delta} \tilde C^3\by{(\pi_i)} X_1\times X_2\times X_3\]
where $\Delta$ is the diagonal embedding and $\pi_i:\tilde C\to X_i$ is the composition $\tilde C\to C\to X\to X_i$. By Lemma \ref{l1}, the diagram
\[\begin{CD}
\sM_\tnil(\L,h(\tilde C^3))@>>> \sM_\tnil(\L,h(X))\\
@VVV @VVV\\
\sM_\tnil(\L,h^+(\tilde C)^{\otimes 3}) @>>> \sM_\tnil(\L,h^+(X_1)\otimes h^+(X_2)\otimes \otimes h^+(X_3))
\end{CD}\]
commutes. Hence it suffices to show that the bottom left group is $0$. But $h^+(\tilde C)=h_1(\tilde C)\oplus \L$, hence
\[h^+(\tilde C)^{\otimes 3}=h_1(\tilde C)^{\otimes 3}\oplus M\otimes \L\]
where $M_{\le 0} =0$. Then $\sM_\tnil(\L,h_1(\tilde C)^{\otimes 3})=0$ by \cite[Prop. 6.1]{kim} and $\sM_\tnil(\L,M\otimes \L)=\sM_\tnil(\un,M)=0$ by (the proof of) Lemma \ref{l1}.
\end{proof}

\begin{cor}[theorem of the cube]\label{c3} Let $X_1,X_2,X_3\in \Sm^\proj$. Then the map
\[A_1^\tnil(X_1\times X_2\times X_3)\to \prod_{i<j} A_1^\tnil(X_i\times X_j)\]
induced by the three projections is injective.
\end{cor}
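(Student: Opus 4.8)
The plan is to deduce Corollary \ref{c3} from Theorem \ref{t1} by a decomposition-of-the-diagonal argument, reducing the kernel of the projection map to a sum of terms each of which Theorem \ref{t1} kills. First I would rewrite the source and target of the map in motivic terms: $A_1^\tnil(X_1\times X_2\times X_3)=\sM_\tnil(\L,h(X_1)\otimes h(X_2)\otimes h(X_3))$, and similarly for the pairwise products. Using the splitting $h(X_i)\simeq \un\oplus h^+(X_i)$ (valid since $k$ is algebraically closed and each $X_i$ has a rational point, by Lemma \ref{l1}), I would expand the triple tensor product into a direct sum of eight summands indexed by subsets $S\subseteq\{1,2,3\}$, the summand for $S$ being $\bigotimes_{i\in S}h^+(X_i)$ (with the empty-set term being $\un$). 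Correspondingly, $h(X_i)\otimes h(X_j)$ decomposes into four summands indexed by subsets of $\{i,j\}$.

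The key observation is that the projection $p_k:X_1\times X_2\times X_3\to X_i\times X_j$ (omitting the $k$-th factor) acts on these summands as follows: on the summand indexed by $S$, it is the identity onto the summand of the target indexed by $S$ when $k\notin S$, and it factors through (indeed is induced by) the structure map $X_k\to\Spec k$ on the $k$-th tensor factor when $k\in S$. But $\sM(\un, h^+(X_k))=0$ by Lemma \ref{l1}, so pushing forward along $X_k\to\Spec k$ annihilates that factor — more precisely, the composite $h^+(X_k)\to\un$ is zero on the relevant Hom-groups, hence the contribution of the summand indexed by $S$ with $k\in S$ to the $k$-th projection vanishes. Therefore a class $\alpha\in\sM_\tnil(\L,h(X_1)\otimes h(X_2)\otimes h(X_3))$ lies in the kernel of \emph{all three} projections if and only if its components in the summands indexed by $S$ with $|S|\le 1$ are all zero for the corresponding index reasons, except that I must check this more carefully: the component in the summand indexed by $S=\{i\}$ survives under the projection $p_k$ for $k\ne i$, so it must vanish; the component in $S=\{i,j\}$ survives under $p_k$ (the unique $k\notin S$), so it must vanish; the component in $S=\emptyset$ survives under every projection, so it vanishes. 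Hence the only possibly-nonzero component of a kernel class is the one in the summand $S=\{1,2,3\}$, i.e. in $h^+(X_1)\otimes h^+(X_2)\otimes h^+(X_3)$.

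Then I would invoke Theorem \ref{t1} with $N=\un$ and $M_i=h(X_i)$: it gives $\sM_\tnil(\L,h^+(X_1)\otimes h^+(X_2)\otimes h^+(X_3))=0$. So the last component vanishes too, hence $\alpha=0$, proving injectivity. The main obstacle — really the only subtle point — is the bookkeeping of how each projection map interacts with the direct-sum decomposition, in particular verifying that on a summand $\bigotimes_{i\in S}h^+(X_i)$ with $k\in S$ the induced map is genuinely zero (not merely ``factors through a point''); this rests on the fact that $\sM_\tnil(\un,h^+(X_k))=0$, which I would cite from Lemma \ref{l1} (and its proof, as is done in the proof of Theorem \ref{t1}). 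Everything else is formal manipulation of Hom-groups in the rigid symmetric monoidal category $\sM_\tnil$, and the additivity statement Lemma \ref{l3}a) can be used to organize the reduction to individual summands.
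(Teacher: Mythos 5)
Your proof is correct and follows essentially the same route as the paper: decompose the motive of the triple product into summands indexed by subsets of $\{1,2,3\}$ via $h(X_i)\simeq\un\oplus h^+(X_i)$, observe that the three projections detect every summand except $h^+(X_1)\otimes h^+(X_2)\otimes h^+(X_3)$, and kill that last summand by Theorem \ref{t1}. One small correction to your justification of the key vanishing: the map $h^+(X_k)\to\un$ induced by $X_k\to\Spec k$ is zero not because $\sM_\tnil(\un,h^+(X_k))=0$ (that is the dual statement, Lemma \ref{l1}) but because $h^+(X_k)$ is by construction the kernel of the split surjection $h(X_k)\to a(h(X_k))=\un$ (equivalently, $\sM(h^+(X_k),\un)=0$ since $A^0(X_k)=\Q$ for $X_k$ connected).
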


\begin{proof} We have $A_1^\sim(X)=\sM_\sim(\L,h^+(X))$ for any adequate relation $\sim$ and any $X\in \Sm^\proj$, and
\begin{multline*}
h^+(X_1\times X_2\times X_3) = h^+(X_1) \oplus h^+(X_2) \oplus h^+(X_3)\\
 \oplus h^+(X_1)\otimes h^+(X_2) \oplus h^+(X_1)\otimes h^+(X_3) \oplus h^+(X_2)\otimes h^+(X_3)\\
 \oplus  h^+(X_1)\otimes h^+(X_2) \otimes h^+(X_3);
\end{multline*}
\[ h^+(X_i\times X_j) = h^+(X_i)\oplus h^+(X_j)\oplus h^+(X_i) \otimes h^+(X_j)\]
hence the statement follows from  Theorem \ref{t1}.
\end{proof}

\begin{cor}[Ouriachi]\label{c2} Let $X_1,X_2,X_3\in \Sm^\proj$. Then Conjecture \ref{c1} holds for $1$-cycles on $X_1\times X_2\times X_3$ if and only if it holds for $1$-cycles on $X_1\times X_2$, $X_1\times X_3$ and $X_2\times X_3$. \qed
\end{cor}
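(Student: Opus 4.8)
The plan is to deduce Corollary \ref{c2} from the theorem of the cube for $1$-cycles (Corollary \ref{c3}), exactly as Ouriachi's insight suggests: Voevodsky's Conjecture \ref{c1} for $1$-cycles on a variety $Z$ amounts to the assertion that the kernel of $A_1^\tnil(Z)\to A_1^\num(Z)$ vanishes, i.e. that a numerically trivial $1$-cycle is already smash-nilpotent. So I would set $Z=X_1\times X_2\times X_3$ and compare the smash-nilpotence and numerical ``defects'' of $Z$ with those of the three pairwise products $X_i\times X_j$.

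First I would record the two compatible direct-sum decompositions used in the proof of Corollary \ref{c3}: the Künneth-type decomposition of $h^+(X_1\times X_2\times X_3)$ into the seven summands $h^+(X_i)$, $h^+(X_i)\otimes h^+(X_j)$ ($i<j$), and $h^+(X_1)\otimes h^+(X_2)\otimes h^+(X_3)$, and the analogous three-summand decomposition of each $h^+(X_i\times X_j)$. Applying $A_1^\sim(-)=\sM_\sim(\L,h^+(-))$ for both $\sim=\tnil$ and $\sim=\num$, and using Lemma \ref{l3}(a) (additivity of the comparison condition $V(M,1)$ over direct sums), I get that $V(X_1\times X_2\times X_3,1)$ holds if and only if $V(M,1)$ holds for each of the seven summands $M$. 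Among these, the six summands occurring in the $h^+(X_i\times X_j)$ are governed precisely by the three conditions $V(X_i\times X_j,1)$, again by Lemma \ref{l3}(a). Hence $V(X_1\times X_2\times X_3,1)$ is equivalent to the conjunction of $V(X_1\times X_2,1)$, $V(X_1\times X_3,1)$, $V(X_2\times X_3,1)$, and $V(h^+(X_1)\otimes h^+(X_2)\otimes h^+(X_3),1)$.

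The final step is to observe that the last condition, $V(h^+(X_1)\otimes h^+(X_2)\otimes h^+(X_3),1)$, is automatic: by definition it says that $\sM_\tnil(\L,h^+(X_1)\otimes h^+(X_2)\otimes h^+(X_3))\to \sM_\num(\L,h^+(X_1)\otimes h^+(X_2)\otimes h^+(X_3))$ is an isomorphism, and the source already vanishes by Theorem \ref{t1} (with $N=\un$), so the target, being a quotient, vanishes too and the map is trivially an isomorphism. Therefore $V(X_1\times X_2\times X_3,1)$ holds if and only if $V(X_i\times X_j,1)$ holds for all three pairs, which is the statement of Corollary \ref{c2}.

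I do not expect a genuine obstacle here, since all the real work is in Theorem \ref{t1} and in the bookkeeping of Corollary \ref{c3}; the only point requiring a little care is the translation between ``Conjecture \ref{c1} holds for $1$-cycles on $Z$'' and the condition $V(Z,1)$ — one must invoke that homological and numerical equivalence agree for $1$-cycles (so that it does not matter whether one compares $\tnil$ with $\num$ directly or factors through $\hom$), which is exactly the input used in Theorem \ref{t5} and is unconditional in this range. With that understood, the equivalence is a formal consequence of additivity of $V(-,1)$ over the Künneth summands together with the vanishing supplied by Theorem \ref{t1}.
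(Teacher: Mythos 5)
Your proof is correct and is exactly the argument the paper intends (the \qed in the statement signals that Corollary \ref{c2} follows formally from Corollary \ref{c3}'s decomposition, Lemma \ref{l3}(a), and the vanishing in Theorem \ref{t1}, just as you write). Your closing caveat about homological versus numerical equivalence is unnecessary: $V(Z,1)$ for $(\sim,\sim')=(\tnil,\num)$ is already, by definition, Conjecture \ref{c1} for $1$-cycles on $Z$, so no comparison with $\hom$ is needed.
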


\begin{cor}[Sebastian \protect{\cite[Th. 9]{seb}}] \label{C1} $V(X,1)$ holds for a product of curves $X=C_1\times\dots \times C_n$. 
\end{cor}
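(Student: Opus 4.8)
The plan is to prove the statement by induction on $n$, with Ouriachi's theorem of the cube (Corollary~\ref{c3}, in the guise of Corollary~\ref{c2}) supplying the inductive step. By Lemma~\ref{l3}~a) I may assume each $C_i$ connected from the start.

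The base of the induction is $n\le 2$: here $\dim X=n\le 2$, so $V(X,1)$ --- which is exactly Conjecture~\ref{c1} restricted to $1$-cycles on $X$ --- holds by Lemma~\ref{l3}. (Concretely: on a connected curve every numerically trivial $1$-cycle is zero, while on a surface $1$-cycles are divisors, so the assertion is the instance $V^*(h(X),1)$ of part~d), $\dim X = 2$ forcing $A^1_\sim(X)=A^\sim_1(X)$.)

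For the inductive step I would take $n\ge 3$, assume the result for all products of at most $n-1$ curves, and group the factors as
\[X=(C_1\times\cdots\times C_{n-2})\times C_{n-1}\times C_n=:X_1\times X_2\times X_3.\]
The three pairwise products are $X_1\times X_2=C_1\times\cdots\times C_{n-1}$ and $X_1\times X_3=C_1\times\cdots\times C_{n-2}\times C_n$, each a product of $n-1$ curves, together with the surface $X_2\times X_3=C_{n-1}\times C_n$. Conjecture~\ref{c1} holds for $1$-cycles on each of them (on the first two by the induction hypothesis, on the third by the base case), so Corollary~\ref{c2} yields Conjecture~\ref{c1} for $1$-cycles on $X$, i.e. $V(X,1)$.

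There is essentially no obstacle once Corollary~\ref{c3} is available; the only point requiring care is the bookkeeping of the strong induction, namely checking that the chosen grouping always produces pairwise products with strictly fewer than $n$ factors, so that the induction closes. Alternatively, one can iterate the injectivity of Corollary~\ref{c3} directly, embedding $A_1^\tnil(C_1\times\cdots\times C_n)$ into a finite product of groups $A_1^\tnil$ attached to products of at most two curves --- on which $\tnil$ and $\num$ already coincide by Lemma~\ref{l3} --- and then chase a numerically trivial $1$-cycle through this embedding.
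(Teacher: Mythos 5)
Your proof is correct and follows essentially the same route as the paper: induction on $n$ with base case $n\le 2$ from Lemma~\ref{l3}, and for $n\ge 3$ an application of Corollary~\ref{c2} to the grouping $X_1=C_1\times\cdots\times C_{n-2}$, $X_2=C_{n-1}$, $X_3=C_n$. The extra bookkeeping you spell out (that each pairwise product has at most $n-1$ factors) is exactly the point the paper leaves implicit.
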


\begin{proof}  We argue by induction on $n$. The case $n\le 2$ follows from Lemma \ref{l3}. Suppose $n\ge 3$. We apply Corollary \ref{c2} to $X_1 = C_1\times \dots \times C_{n-2}$, $X_2 = C_{n-1}$ and $X_3 = C_n$.
%
%
\end{proof}

\begin{rques} 1) As Sebastian's, Ouriachi's proof relied on the smash-nilpoten\-ce of the Gross-Schoen modified diagonal $\Delta^{(3)}$ for the cube of a curve $C$\footnote{Sebastian only proved this for the modified diagonal to an $m$-th power of $C$ for $m\gg 0$, see \cite[end of introduction]{seb}; I thank the referee for pointing this out.}, which follows from the odd finite dimensionality of the tensor cube of its $h_1$. (Indeed, $\Delta^{(3)}$ viewed as a morphism from $\L$ to $h(C)^{\otimes 3}$ factors through $h^+(C)^{\otimes 3}$.)  This modified diagonal is implicit in the proof of Theorem \ref{t1}.\\
2) Unfortunately, I am not able to prove a ``theorem of the square'', i.e. $V(1,M_1\otimes M_2)$ for two reduced motives $M_1, M_2$.
\end{rques}

\subsection{Clarifying the notion of ``abelian type''}\label{s5.1} By a \emph{motivic category} we mean an additive $\otimes$-category $\sC$ provided with a $\otimes$-functor $T:\sM\to \sC$. (So $\sC$ may be $\sM_\sim$ for some other adequate equivalence $\sim$, $\sM_\sim^\o$ (see \S \ref{s5}), André's category $\sM^A$ of motivated motives \cite{A}, Deligne's category of motives for absolute Hodge cycles \cite{delmil}, or even the category of pure polarisable Hodge structures). For $X\in \Sm^\proj$, we write $h_\sC(X)$ for $T(h(X))$. We say that an object $M\in \sC$ is \emph{of abelian type} if $M$ is isomorphic to a direct summand of $n h_\sC(A)$ for some abelian variety $A$ and some integer $n>0$, where we write $nM$ for an $n$-fold direct sum $M\oplus M\oplus\dots \oplus M$.

\begin{lemma} a) The class of objects of abelian type is closed under tensor products, direct sums and direct summands; it contains the unit object and (the image of) the Lefschetz motive.\\
b) An object is of abelian type if and only if it is isomorphic to a direct summand of a motive of the form $nh_\sC(C^m)$, where $C$ is a curve.
\end{lemma}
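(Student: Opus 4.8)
The plan is to verify each assertion directly from the definition of "abelian type'', using the structural results already recalled in the paper.

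For part a), I would start with the unit object: $\un \simeq h_\sC(\Spec k)$ is the motive of a point, which is a (zero-dimensional) abelian variety, so it is of abelian type. For the Lefschetz motive, recall that $h_\sC(\P^1) = \un \oplus \L$ (under the covariant convention, $h(\P^1) = \L^0 \oplus \L^1$); since $\P^1$ is not abelian, instead use that $\L$ is a direct summand of $h_\sC(E)$ for any elliptic curve $E$ via the Chow--Künneth decomposition $h(E) = h_0(E) \oplus h_1(E) \oplus h_2(E)$ with $h_2(E) = \L$, and $T$ sends this decomposition to one in $\sC$; so $\L = h_{2,\sC}(E)$ is a summand of $h_\sC(E)$, hence of abelian type. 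Closure under direct sums is immediate: if $M \mid n h_\sC(A)$ and $M' \mid n' h_\sC(A')$, then $M \oplus M' \mid n h_\sC(A) \oplus n' h_\sC(A') \mid (n+n') h_\sC(A \times A')$ using $h_\sC(A \times A') \supseteq h_\sC(A) \oplus h_\sC(A')$ (via the point sections, or simply $h_\sC(A \sqcup A') = h_\sC(A) \oplus h_\sC(A')$ together with $A \sqcup A' \hookrightarrow A \times A'$ being... — cleaner: use $h_\sC(A) \oplus h_\sC(A')$ is a summand of $h_\sC(A) \otimes h_\sC(A') = h_\sC(A \times A')$ via the rational points of $A$ and $A'$, i.e. the splitting $h_\sC(A) = \un \oplus h^+_\sC(A)$). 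Closure under direct summands is tautological since "direct summand of $n h_\sC(A)$'' is transitive under taking summands. For tensor products: if $M \mid n h_\sC(A)$ and $M' \mid n' h_\sC(A')$, then $M \otimes M' \mid n n'\, h_\sC(A) \otimes h_\sC(A') = n n'\, h_\sC(A \times A')$, and $A \times A'$ is again an abelian variety, so $M \otimes M'$ is of abelian type.

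For part b), one direction is a special case of a): a curve $C$ has a Jacobian $J = \Pic^0_C$, an abelian variety, and $h_\sC(C)$ is a direct summand of $h_\sC(J)$ — indeed $h_1(C) \simeq h_1(J)$, and $h_0(C) = \un$, $h_2(C) = \L$ are both summands of $h_\sC(J)$ by the argument above (a point and the elliptic-curve-type argument, or directly from the Chow--Künneth decomposition of an abelian variety which contains $\un$ and $\L$ as the extreme pieces). Hence $h_\sC(C)$, and therefore $h_\sC(C^m)$ (a summand of $h_\sC(J)^{\otimes m} = h_\sC(J^m)$) and $n h_\sC(C^m)$, are of abelian type, so any of its summands is. Conversely, suppose $M$ is of abelian type, $M \mid n h_\sC(A)$. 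Since $k$ is algebraically closed (hence infinite), the abelian variety $A$ is generated as a group by the image of a curve $C \hookrightarrow A$ obtained as a general complete-intersection curve section (a Bertini/Lefschetz argument: a general curve section of $A$ surjects onto $A$ after summing its translates, equivalently the induced map $J_C = \Pic^0_C \to A$ is surjective). Surjectivity of $J_C \to A$ gives, after passing to motives, a split surjection $h_1(C) \simeq h_1(J_C) \to h_1(A)$ (split because the category is pseudo-abelian over $\Q$ and an isogeny complement splits it), so $h_1(A)$ is a summand of $h_1(C)$; combined with $h_0(A) = \un$, $h_2(A) = \L$ being summands of $h_\sC(C)$ as well, and the fact that $h_\sC(A) = \bigwedge^\bullet h_1(A)$ is built from $h_1(A)$ by the Künneth/Shermenev--Deninger--Murre decomposition, one concludes $h_\sC(A)$ is a direct summand of some $n' h_\sC(C^{2\dim A})$ (or a tensor power thereof). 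Therefore $M$ is a summand of $n n'\, h_\sC(C^{2\dim A})$, as required.

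The main obstacle is the converse in b): one must produce a single curve $C$ with $J_C \twoheadrightarrow A$. This is a standard fact (a general curve section of a polarized abelian variety, or of any projective variety containing $A$... — more precisely one embeds $A$ projectively and takes a general linear-section curve through the origin, then the Abel--Jacobi map $C \to A$ followed by summation $C^g \to A$ is surjective; equivalently $A$ is generated by any curve not contained in a proper abelian subvariety), but it requires invoking Bertini-type irreducibility over the algebraically closed field $k$ and the fact that the image of a curve not lying in a translate of a proper abelian subvariety generates $A$. Once the surjection $J_C \to A$ is in hand, the rest is formal manipulation of Chow--Künneth decompositions of abelian varieties and the pseudo-abelianity of $\sC$, which I would treat as routine given the earlier parts of the paper. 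A minor point to be careful about is that $\sC$ is only assumed additive $\otimes$ (not necessarily pseudo-abelian), so strictly one should either assume $\sC$ Karoubian or phrase "direct summand'' in $\sM$ first and then apply $T$; since $\sM$ is pseudo-abelian this causes no real difficulty, and I would note it in passing.
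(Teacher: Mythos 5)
Your proposal is correct and follows essentially the same route as the paper: part a) by the tautological/product arguments plus $\L\mid h_\sC(E)$ for an elliptic curve, and the converse in b) by taking an ample curve $C\subset A$, splitting $h_1(A)$ off $h_1(C)$ via the Jacobian, and rebuilding $h_\sC(A)$ from the decomposition $h_i(A)=S^i(h_1(A))$ inside $h_\sC(C^{2g})$. The only differences are cosmetic (the paper treats the ``if'' direction of b) as implicit in a), and your remark about pseudo-abelianity is harmless since ``direct summand of $nh_\sC(A)$'' already makes sense in any additive category).
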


\begin{proof} a) The claim is obvious for tensor products and direct summands; for direct sums, note that $h_\sC(A)\oplus h_\sC(B)$ is a direct summand of $2 h_\sC(A\times B)$ for two abelian varieties $A,B$. Finally, $\L$ is a direct summand of $h(E)$ for any elliptic curve $E$.

b) It suffices to show that, for any abelian variety $A$, $h_\sC(A)$ is a direct summand of $h_\sC(C^m)$ for suitable $C$ and $m$. This is classical: we choose for $C$ an ample curve on $A$ and $m=2g$, where $g=\dim A$. Then $h^1_\sC(A)$ is a direct summand of $h^1_\sC(C)$, hence $h^i_\sC(A)=S^i(h^1_\sC(A))$ is a direct summand of $h^1_\sC(C)^{\otimes i}$, which is itself a direct summand of $h_\sC(C^i)$ hence also of $h_\sC(C^{2g})$.
\end{proof}

Let $U:\sC\to \sD$ be a $\otimes$-functor. If $M\in \sC$ is of abelian type, so is $U(M)$, but the converse is not necessarily clear and may be false; for example, motives of K3 surfaces are of abelian type in $\sM^A$ \cite[Th. 7.1]{A}, but this is an open question in $\sM_\hom$. It is obviously false if $\sC=\sM$, $\sD$ is the category of $\Z$-graded $K$-vector spaces and $U$ is given by a Weil cohomology with coefficients $K$.  

If $U$ is full with locally nilpotent kernel, this converse is true by lifting idempotents; for example, a motive is of abelian type in $\sM$ if and only if it is so in $\sM_\alg$ or $\sM_\tnil$, but the same question is open for $\sM_\num$ (possibility of ``phantom motives''). A basic example is Bloch's conjecture for surfaces.

\subsection{The abelian and representable parts of a motive} Let $M\in \sM$. By Jannsen's semi-simplicity theorem, write $M_\num$ as a direct sum of simple motives: $M_\num=\bigoplus_\alpha S_\alpha$. Collect all those $S_\alpha$ which are (numerically) of abelian type (resp. of the form $h_1(A_i)(n_i)$, where $A_i$ is a simple abelian variety and $n_i\ge 0$). Call this submotive $M_\num^\ab$ (resp. $M_\num^\rep$): this is the \emph{abelian (resp. representable) part} of $M_\num$.

\begin{lemma} There exists a direct summand $M^\ab$ (resp. $M^\rep$) of $M$ lifting $M^\ab_\num$ (resp. $M^\rep_\num$); it is unique up to isomorphism.
\end{lemma}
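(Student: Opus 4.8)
The plan is to produce $M^{\ab}$ and $M^{\rep}$ by lifting idempotents from $\sM_\num$, but \emph{not} inside $\End_\sM(M)$: the kernel of $\sM\to\sM_\num$ is not known to be locally nilpotent on $\End_\sM(M)$, so one must import finite-dimensionality from motives of abelian varieties. First I would fix a finite-dimensional model. Since $M^{\ab}_\num$ (resp. $M^{\rep}_\num$) is of abelian type by construction, it is a direct summand of $N_\num$ for $N=n\,h(A)$ with $A$ a suitable abelian variety and $n>0$ --- for $M^{\rep}_\num=\bigoplus_i h_1(A_i)(n_i)$ one takes $A=\prod_i A_i\times E^{m}$, $E$ an elliptic curve, $m=\max_i n_i$, using that $\L$ is a summand of $h(E)$ to absorb the Tate twists. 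Now $h(A)$, hence $N$, is finite-dimensional (Shermenev--Künnemann; see \cite{kunn}, \cite{kim}: $h(A)=\bigoplus_i\Lambda^i h_1(A)$ with $h_1(A)$ finite-dimensional), and so is every direct summand; by Kimura's theorem \cite[Prop.~7.5]{kim} the surjection $\End_\sM(N)\twoheadrightarrow\End_{\sM_\num}(N_\num)$ has nilpotent kernel, so idempotents lift along it. Lifting the projector of $M^{\ab}_\num$ (resp. $M^{\rep}_\num$) inside $N$ gives a Chow motive $P$, of abelian (resp. representable) type, with $P_\num\simeq M^{\ab}_\num$ (resp. $\simeq M^{\rep}_\num$).

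Next I would transplant $P$ into $M$. Numerically, $P_\num\simeq M^{\ab}_\num$ is a direct summand of $M_\num$, so there are $\bar u\colon M_\num\to P_\num$ and $\bar v\colon P_\num\to M_\num$ with $\bar u\bar v=\mathrm{id}_{P_\num}$ and $\bar v\bar u$ equal to the projector cutting out $M^{\ab}_\num$. By fullness of $\sM\to\sM_\num$, lift these to $u\colon M\to P$ and $v\colon P\to M$ in $\sM$. Then $uv\in\End_\sM(P)$ reduces to $\mathrm{id}_{P_\num}$; as $P$ is finite-dimensional, $uv-\mathrm{id}_P$ lies in a nilpotent ideal, so $uv\in\Aut_\sM(P)$. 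Replacing $v$ by $v(uv)^{-1}$ (which still reduces to $\bar v$), one may assume $uv=\mathrm{id}_P$, so $\varepsilon:=vu\in\End_\sM(M)$ is idempotent with $\varepsilon_\num$ equal to the projector of $M^{\ab}_\num$. Then $M^{\ab}:=\IM(\varepsilon)$ is a direct summand of $M$, isomorphic to $P$, with $(M^{\ab})_\num=\IM(\varepsilon_\num)=M^{\ab}_\num$; the same construction with the representable $P$ gives $M^{\rep}$.

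For uniqueness, let $M^{\ab}$ and $(M^{\ab})'$ be two direct summands of $M$ of abelian type realising $M^{\ab}_\num$. Composing the two isomorphisms with $M^{\ab}_\num$ gives mutually inverse maps between $(M^{\ab})_\num$ and $(M^{\ab})'_\num$, which (both being summands of $M$, and by fullness) lift to $a\colon M^{\ab}\to(M^{\ab})'$ and $b\colon(M^{\ab})'\to M^{\ab}$ in $\sM$. Then $ba$ and $ab$ reduce numerically to identities, and since $M^{\ab}$ and $(M^{\ab})'$ are finite-dimensional these composites are automorphisms, so $M^{\ab}\simeq(M^{\ab})'$; the same argument applies to $M^{\rep}$.

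The main obstacle --- and the reason for the detour through abelian varieties --- is exactly that $\sM\to\sM_\num$ is not known to have nil kernel on $\End_\sM(M)$, so one can neither lift the relevant projector nor compare two lifts directly inside $M$: everything rests on Kimura finite-dimensionality of $h(A)$. This is also why the uniqueness has to be read for lifts that are themselves of abelian (resp. representable) type, since a nonzero ``phantom'' direct summand of $M$ with vanishing numerical motive, were one to exist, could be adjoined without affecting $M^{\ab}_\num$. The only slightly fiddly point is the bookkeeping in the first step for $M^{\rep}$ --- that the finitely many summands $h_1(A_i)(n_i)$ are jointly cut out of $n\,h(A)_\num$ for a single abelian variety $A$ --- but that is routine.
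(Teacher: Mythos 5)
Your proposal is correct and follows essentially the same route as the paper: realise $M^\ab_\num$ (resp.\ $M^\rep_\num$) as a summand of the numerical motive of an abelian variety, lift the idempotent there using Kimura finite-dimensionality of $h(A)$, then lift the inclusion and projection by fullness and use that a numerically trivial endomorphism of a finite-dimensional motive is nilpotent to split the lift off as a direct summand of $M$. Your explicit caveat that uniqueness must be read among finite-dimensional (abelian/representable type) lifts, because of possible phantom summands, is a correct and worthwhile clarification of what the paper leaves implicit.
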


\begin{proof} Let $S\in \sM_\num$ be a simple motive of abelian type. Write $S=(h_\num(A),p)$, where $A$ is an abelian variety and $p=p^2$ is an idempotent correspondence. By \cite[]{kim}, lift $p$ to an idempotent $\tilde p\in \End_\sM(h(A))$; this lift is unique up to conjugation. The corresponding direct summand $\tilde S=(h(A),\tilde p)$ is a lift of $S$ to $\sM$. If $S=(h_\num(B),q)$ for another abelian variety $B$, then $S$ is also a direct summand of $h(A\times B)$, which shows that $\tilde S$ does not depend on the choice of $A$, up to isomorphism.

Lift $M_\num^\ab$ to some $M^\ab\in \sM$, simple summand by simple summand. It remains to show that $M^\ab$ is isomorphic to a direct summand of $M$. Let $i:M_\num^\ab\inj M_\num$ and $\pi:M_\num\surj M_\num^\ab$ be the injection and the projection, so that $\pi \circ i= 1_{M_\num^\ab}$. Lifting $i$ to $\tilde\imath$ and $\pi$ to $\tilde\pi$, the endomorphism $\tilde\pi\circ \tilde\imath- 1_M$ is numerically equivalent to $0$, hence nilpotent, thus the claim. Same reasoning for $M^\rep$.
\end{proof}

\begin{ex} Let $X$ be a smooth complete intersection of dimension $n$ and multidegree $(a_1,\dots, a_d)$. Write $h(X)=\bigoplus_{i=0}^{2n} h_i(X)$ and $h_n(X)=\L^{n/2}\oplus p(X)$ if $n$ is even; if $n$ is odd, put $p(X):=h_n(X)$. If $\car k=0$, then $p(X)_\num$ is representable if and only if we are in the situation  of \cite[2.9]{del}. In this case, $h(X)=h(X)^\rep\oplus M$, where $M$ is a phantom motive; we cannot exclude \emph{a priori} that $M\ne 0$.
\end{ex}

\subsection{Birational motives}\label{s5}  Recall from \cite{ks} the category of pure birational motives $\sM_\sim^\o$. The following is elementary but powerful:

\begin{thm}\label{t2} Let $N_1,N_2\in \sM$. Suppose that $N_1$ becomes isomorphic to a direct summand of $N_2$ in $\sM^\o$. 
Let $n\ge 0$. If $V(M,n)$ (resp. $V^*(M,n)$) is true for any $M$,  then $V(N_2,n+1) \Rightarrow V(N_1,n+1)$ (resp. $V^*(N_2,n+1) \Rightarrow V^*(N_1,n+1)$). In particular, for $(\sim,\sim')=(\tnil, \num)$:
\begin{itemize}
\item Conjecture \ref{c1} is true for $N_1$ if $N_2$ is the motive of a curve or a surface;
\item $V(N_2,1) \Rightarrow V(N_1,1)$ and $V^*(N_2,2) \Rightarrow V^*(N_1,2)$.
\end{itemize}
\end{thm}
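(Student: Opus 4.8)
The plan is to reduce the statement to a comparison of Hom-groups out of $\L^{n+1}$ (resp. into $\L^{n+1}$) between birational motives and ordinary motives, and then feed in the hypothesis $V(M,n)$ applied to suitable auxiliary motives. First I would recall the basic structure from \cite{ks}: there is a localisation functor $\sM_\sim\to\sM_\sim^\o$, and for $X\in\Sm^\proj$ connected one has $\sM_\sim^\o(\L^j,h^\o(X))=0$ for $j>0$ while $\sM_\sim^\o(h^\o(X),\L^j)=0$ for $j>0$; more precisely, the ``defect'' between $\sM_\sim$ and $\sM_\sim^\o$ in the relevant degree is measured by Chow groups of lower-dimensional pieces, and the kernel of $A_\sim^n(X)\to A_\sim^{\o,n}(X)$ (resp. the analogous statement for $A_n$) is generated by cycles supported on a divisor. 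So if $N_1$ is a summand of $N_2$ in $\sM^\o$, then the difference $\sM_\sim(\L^{n+1},N_i)$ versus $\sM_\sim^\o(\L^{n+1},N_i^\o)$ is controlled by morphisms $\L^{n+1}\to M'(1)$ with $M'$ a motive built from smaller-dimensional varieties, i.e. by $\sM_\sim(\L^n,M')$ via Lemma~\ref{l3}b).

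Concretely, I would argue as follows. Pick a splitting in $\sM^\o$: morphisms $\iota:N_1^\o\to N_2^\o$ and $\rho:N_2^\o\to N_1^\o$ with $\rho\iota=1$. Lift these (non-canonically) to $\sM$; the composite $\rho\iota-1_{N_1}$ then lies in the kernel of $\sM(N_1,N_1)\to\sM^\o(N_1^\o,N_1^\o)$, which by the description of birational motives factors through a motive of the form $M_0(1)$ with $M_0\in\sM$. Now consider the two squares
\[\begin{CD}
\sM_\tnil(\L^{n+1},N_2) @>>> \sM_\num(\L^{n+1},N_2)\\
@VV{\rho_*}V @VV{\rho_*}V\\
\sM_\tnil(\L^{n+1},N_1) @>>> \sM_\num(\L^{n+1},N_1)
\end{CD}\]
and the analogous one with $\iota$; composing, $\iota_*\rho_*$ differs from the identity on $\sM_\tnil(\L^{n+1},N_1)$ by a map factoring through $\sM_\tnil(\L^{n+1},M_0(1))=\sM_\tnil(\L^n,M_0)$, and similarly on the $\num$ side. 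By $V(M_0,n)$ the two correction terms agree, so the horizontal comparison map for $N_1$ is a retract (compatibly) of the one for $N_2$; hence if $V(N_2,n+1)$ holds — i.e. the top-right horizontal arrow for $N_2$ is an isomorphism — then so does $V(N_1,n+1)$. The $V^*$ case is dual, replacing $\L^{n+1}$ in the source by $\L^{n+1}$ in the target and using $V^*(M',n)$; the factorisation through $M'(1)$ again reduces the defect to degree $n$.

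For the two bulleted consequences with $(\sim,\sim')=(\tnil,\num)$: Lemma~\ref{l3}c) gives $V(M,0)$ for all $M$, and Lemma~\ref{l3}d) gives $V^*(M,1)$ for all $M$, so the hypothesis ``$V(M,n)$ (resp. $V^*(M,n)$) for all $M$'' is satisfied for $n=0$ (resp. $n=1$) unconditionally. Thus the general statement with $n=0$ yields $V(N_2,1)\Rightarrow V(N_1,1)$, and with $n=1$ yields $V^*(N_2,2)\Rightarrow V^*(N_1,2)$. When $N_2=h(C)$ for a curve, $V(C,1)$ and $V^*(C,2)$ hold trivially (a curve has no interesting $1$-cycles in codimension~$\ge 2$, and Lemma~\ref{l3} already covers $\dim\le 2$); when $N_2=h(S)$ for a surface, $V(S,1)$ is Lemma~\ref{l3} again (surfaces have trivial Griffiths groups, as noted after Theorem~\ref{t5}), and $V^*(S,2)=V^*(S,2)$ is the $0$-cycle case, true by Theorem~\ref{t0}. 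Hence Conjecture~\ref{c1} holds for any $N_1$ that is a birational summand of a curve or surface motive.

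\textbf{Main obstacle.} The delicate point is the precise identification of the ``defect'' motive: I am asserting that $\Ker\big(\sM_\sim(N_1,N_1)\to\sM_\sim^\o(N_1^\o,N_1^\o)\big)$, and more relevantly the failure of the localisation functor to be full/faithful in the degrees that matter, is captured by a Tate twist $M_0(1)$ of an honest motive to which the inductive hypothesis $V(M_0,n)$ applies. This requires care with the description of $\sM_\sim^\o$ from \cite{ks}: one needs that morphisms killed in $\sM^\o$ come from cycles supported in codimension $\ge 1$, hence — after resolving/normalising the support and using that we are in the effective, geometric setting — factor through $h(\text{lower-dim'l }Z)(1)$, and that this factorisation is compatible with the retraction data $\iota,\rho$. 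Getting the bookkeeping of these lifts and the compatibility of the two correction terms (on the $\tnil$ and $\num$ sides) exactly right, so that $V(M_0,n)$ can be invoked, is where the real work lies; everything else is formal manipulation of split idempotents and the elementary Lemma~\ref{l3}.
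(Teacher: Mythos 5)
Your proposal is correct and follows essentially the same route as the paper: lift the splitting $\iota,\rho$ from $\sM^\o$ to $\sM$, use \cite[Cor.~2.4.3]{ks} to factor $\rho\iota-1_{N_1}$ through a Tate twist $M_0(1)$, and absorb the correction term via $V(M_0,n)$ after identifying $\sM(\L^{n+1},M_0(1))\simeq\sM(\L^n,M_0)$; the bulleted consequences then follow from Lemma~\ref{l3} exactly as you say. (The composite acting on $\sM(\L^{n+1},N_1)$ should be $\rho_*\iota_*$ rather than $\iota_*\rho_*$, but this is only a typo.)
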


\begin{proof} Let $\alpha^\o:N_1^\o\iso N_2^\o$, $\beta^\o:N_2^\o\iso N_1^\o$ be such that $\beta^\o\alpha^\o=1_{N_1^\o}$. Lift them to $\alpha:N_1\to N_2$ and $\beta:N_2\to N_1$. Then $1_{N_1}-\beta\alpha$ factors through $P(1)$ for some $P\in \sM$ \cite[Cor. 2.4.3]{ks}, i.e. $1_{N_1}-\beta\alpha  = \delta\gamma$ with $\gamma:N_1\to P(1)$, $\delta:P(1)\to N_1$. 

Let $x\in \sM(\L^{n+1},N_1)$ be such that $x\sim' 0$, and suppose that $\alpha_*x\sim 0$. Then $\beta_*\alpha_* x\sim 0$, hence
\[x \sim(1_{N_1}-\beta\alpha)_*x=\delta_* \gamma_* x\]
where $\gamma_* x\in \sM(\L^n, P)$. But $\gamma_* x\sim' 0$, so $\gamma_* x\sim  0$ and finally $x\sim 0$. Dual reasoning for $x\in \sM(N_1,\L^{n+1})$.
\end{proof}

\subsection{Examples}\label{s8} Let us say that $N_1\in \sM$ is \emph{birationally of dimension $\le d$} (resp.  \emph{birationally of abelian type}) if one can choose $N_2=h(Y)$ in Theorem \ref{t2} with $\dim Y\le d$ (resp. $Y$ an abelian variety). Then

\begin{cor}\label{c4} a) If $M$ is birationally of dimension $\le 2$, then $V(M,1)$ and $V^*(M,2)$ hold (for $(\sim,\sim')=(\tnil,\num)$. In particular
\begin{description}
\item[\protect{\cite[Th. 1]{seb2}}] Conjecture \ref{c1} holds for uniruled $3$-folds.
\item[\protect{\cite[Th. 2 (A)]{seb2}}] if the MRCC-quotient of $X\in \Sm^\proj$ has dimension $\le 2$, then $V(X,1)$ and $V^*(X,2)$ hold.
\item[\protect{\cite[Prop. 3.5.3]{ks}}] If $X$ is a smooth complete intersection in $\P^r$ of multidegree $(a_1,\dots, a_d)$ with $\sum a_i \le r$, then $V(X,1)$ holds. (This covers cubic fourfolds as a very special case, \cite[Th. A]{gumu}.)
\item[{\cite[Ex. 7.3]{Kzeta}}] More generally, if $X$ is rationally chain-connected, then $V(X,1)$ holds. (This covers Fano varieties, and in particular the other examples of \cite{gumu}: I am grateful to the referee for pointing this out.)
\end{description}
b) $V(X,1)$ holds if $X$ is birationally of abelian type. In particular \cite[Th. 2 (B)]{seb2}, it holds if its MRCC quotient is of abelian type.
\end{cor}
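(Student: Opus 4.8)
The plan is to derive everything from Theorem \ref{t2}. Read with $(\sim,\sim')=(\tnil,\num)$, its last two assertions say that $V(N_2,1)\Rightarrow V(N_1,1)$ and $V^*(N_2,2)\Rightarrow V^*(N_1,2)$ as soon as $N_1$ is a direct summand of $N_2$ in $\sM^\o$. Since Conjecture \ref{c1}, hence both $V(Y,1)$ and $V^*(Y,2)$, holds for every $Y\in\Sm^\proj$ of dimension $\le 2$ by Lemma \ref{l3}, this is precisely the opening sentence of (a): if $M$ is birationally of dimension $\le 2$, then $V(M,1)$ and $V^*(M,2)$ hold. So all the content of (a) lies in checking, item by item, that the variety in question is birationally of dimension $\le 2$.

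If $X$ is rationally chain connected --- which covers Fano varieties, hence smooth complete intersections $X\subseteq\P^r$ of multidegree $(a_1,\dots,a_d)$ with $\sum a_i\le r$ --- then $CH_0(X_L)_\Q=\Q$ for every field extension $L/k$ (immediate over the algebraically closed $k$), so that $h^\o(X)\simeq\un$ and $X$ is birationally of dimension $0$; this recovers \cite[Ex. 7.3]{Kzeta} and \cite[Prop. 3.5.3]{ks}. For the two remaining items of (a), let $Z$ be the MRCC quotient of $X$ and $\tilde Z$ a smooth projective model of it. The general fibre of the rational map $X\dashrightarrow\tilde Z$ is rationally chain connected, hence universally $CH_0$-trivial, and the structure theory of birational motives then gives $h^\o(X)\simeq h^\o(\tilde Z)$. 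When $\dim Z\le 2$ this shows $X$ is birationally of dimension $\le 2$, recovering \cite[Th. 2 (A)]{seb2}; and for a uniruled threefold $\dim Z\le 2$ holds automatically, recovering \cite[Th. 1]{seb2}.

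For (b) the only extra input is $V(A,1)$ for an abelian variety $A$. By the classical fact recalled in \S\ref{s5.1}, $h(A)$ is a direct summand of $h(C^{2g})$ for $g=\dim A$ and $C$ an ample curve on $A$; Corollary \ref{C1} gives $V(C^{2g},1)$, and Lemma \ref{l3}(a) transports it to the summand $h(A)$, so $V(A,1)$ holds. Applying Theorem \ref{t2} with $N_2=h(A)$ and $N_1=h(X)$ --- a direct summand of $N_2$ in $\sM^\o$, by definition of ``birationally of abelian type'' --- then yields $V(X,1)$. The MRCC variant is the same bookkeeping: if the MRCC quotient of $X$ is of abelian type, then $h^\o(X)\simeq h^\o(\tilde Z)$ is a direct summand of $h^\o(A)$ for some abelian variety $A$, so $X$ is birationally of abelian type, recovering \cite[Th. 2 (B)]{seb2}.

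The step that I expect to require genuine work, rather than formal manipulation, is the assertion that a dominant rational map with rationally chain connected general fibre induces an isomorphism $h^\o(X)\simeq h^\o(\tilde Z)$ of birational motives (and, in the same vein, that $h^\o$ of a rationally chain connected variety is $\un$): here the geometric input --- universal $CH_0$-triviality of the fibres --- has to be converted into an identity of morphisms in $\sM^\o$ through the computation of its Hom-groups, and one must take care over base change to function fields and over the replacement of the possibly singular MRCC quotient by a resolution. Granting that, everything else reduces to Theorem \ref{t2}, Lemma \ref{l3} and Corollary \ref{C1}.
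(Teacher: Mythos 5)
Your proposal is correct and follows essentially the same route as the paper: deduce everything from Theorem \ref{t2} combined with Lemma \ref{l3} for the dimension $\le 2$ cases and with Corollary \ref{C1} (plus the fact that $h(A)$ is a summand of $h(C^{2g})$) for the abelian case, the only nonformal input being the birational-motive isomorphism $h^\o(X)\simeq h^\o(Q)$ for the MRCC quotient, which the paper simply cites from \cite[Cor. 6.8 b)]{adjoints}. The step you flag as requiring genuine work is indeed exactly the one the paper outsources to that reference.
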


\begin{proof} a) follows from Theorem \ref{t2} and Lemma \ref{l3}. The first consequence is obvious, and the second follows from the isomorphism of birational motives $h^\o(X)\iso h^\o(Q)$ where $Q$ is its MRCC quotient \cite[Cor. 6.8 b)]{adjoints}. Similarly, b) follows from Theorem \ref{t2} and Corollary \ref{C1}.
\end{proof}

\begin{rque} Unless mistaken, Laterveer's varieties $X$ in \cite[Th. 3.1]{laterveer} are of abelian type and such that, in a Chow-Künneth decomposition
\[h(X)\simeq \bigoplus_{i=0}^{2\dim X} h_i(X)\]
the even components $h_{2j}(X)$ are direct summands of motives $h(S_j)(j-1)$ for suitable surfaces $S_j$ (a Lefschetz twist seems to be missing in the proof of Lemma 2.15). Hence Conjecture \ref{c1} for $X$ follows from the odd finite dimensionality of $h_i(X)$ for $i$ odd, as in \cite{k-s}, and from Lemma \ref{l3} for $i$ even.
\end{rque}

In \cite[Th. 4]{seb2}, Sebastian proves that if $X$ is a smooth projective complex variety such that $\dim_\Q CH_i(X)_\Q<\infty$  for $0 \le i \le l$ for some $l$, then numerical and smash-nilpotence equivalence coincide for cycles of dimension $\le l + 1$. To recover it in the same spirit as above, we use a lemma. For 
$M\in \sM$ and $i\ge 0$, say as in \cite[Def. 2.1]{vial} 
that $CH_i(M)_\alg$ is \emph{representable} if there exists 
a smooth projective curve $C$ over $\Omega$ (not necessarily connected) and a morphism $\Gamma: h_1(C)(i)\to M_\Omega$ such that the induced morphism $\Gamma_* : CH_0(C)_\alg \to CH_i(M_\Omega)_\alg$ is surjective, where $\Omega$ is a universal domain over $k$. By loc. cit., Th. 3.4, $CH_*(M)_\alg$ is representable if and only if $M$ is isomorphic to a direct sum of Lefschetz motives and twisted $h_1$’s of abelian varieties. Vial's inductive proof actually gives the following refinement:

\begin{lemma} \label{l7.1} Let 
$M\in \sM$ and $n\ge 0$. Then the following are equivalent:
\begin{thlist}
\item $CH_i(M)_\alg$ is representable for $i=0,\dots, n-1$;
\item $M$ is of the form $\bigoplus_{i=0}^{n-1} r_i \L^i\oplus  h_1(A_i)(i)\oplus M_n(n)$ (with $M_n$ effective).
\end{thlist}
Moreover, $CH_i(M_\Omega)$ is finite dimensional for $i\le n$ if and only (ii) holds with all $A_i$'s trivial. (In particular, (i) also holds.)\qed
\end{lemma}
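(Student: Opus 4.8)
The plan is to reduce Lemma \ref{l7.1} to the statement recalled just before it from Vial's paper \cite[Th. 3.4]{vial}, by tracking what Vial's inductive argument produces at each stage. First I would observe that the implication (ii) $\Rightarrow$ (i) is essentially immediate: for $M$ of the displayed form, $CH_i(M)_\alg$ for $i\le n-1$ receives contributions only from the summands $r_i\L^i$, $h_1(A_i)(i)$ with $i\le n-1$ (the summand $M_n(n)$ contributes nothing to $CH_i$ for $i<n$ since a Lefschetz twist by $n$ shifts dimension up by $n$, so $CH_i(M_n(n))_\alg = CH_{i-n}(M_n)_\alg = 0$), and each of these is representable by definition — $\L^i$ via the structure morphism of a point twisted appropriately, and $h_1(A_i)(i)$ via an ample curve $C\subseteq A_i$ as in the lemma on abelian type, whose $h_1$ surjects onto $h_1(A_i)$.

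For the harder direction (i) $\Rightarrow$ (ii), I would run Vial's induction on $n$. The base case $n=0$ is vacuous (take $M_0 = M$). For the inductive step, suppose $CH_i(M)_\alg$ is representable for $i = 0,\dots,n-1$. In particular $CH_0(M)_\alg$ is representable, so by the case $n=1$ of Vial's argument one splits off the part of $M$ accounting for $CH_0$: one gets $M \simeq r_0\L^0 \oplus h_1(A_0) \oplus M'(1)$ with $M'$ effective. Now I claim $CH_i(M'(1))_\alg$, equivalently $CH_{i-1}(M')_\alg$, is representable for $i = 1,\dots, n-1$, i.e. $CH_j(M')_\alg$ is representable for $j = 0,\dots, n-2$: indeed $CH_i(M)_\alg = CH_i(r_0\L^0)_\alg \oplus CH_i(h_1(A_0))_\alg \oplus CH_{i-1}(M')_\alg$, and the first two summands are already representable, so $CH_{i-1}(M')_\alg$ is a direct summand of a representable Chow group, hence representable (representability passes to direct summands — this should be noted, or one invokes the intrinsic characterisation via Vial's Th. 3.4 for the truncated object, being slightly careful that we only know it in a range). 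Applying the inductive hypothesis to $M'$ with $n$ replaced by $n-1$ gives $M' \simeq \bigoplus_{j=0}^{n-2} r'_j\L^j \oplus h_1(A'_j)(j) \oplus M_n(n-1)$, and twisting by $\L$ and reassembling yields the desired form for $M$ with $M_n$ effective.

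For the finite-dimensionality addendum, I would argue that $CH_i(M_\Omega)$ is finite dimensional for all $i\le n$ if and only if in the decomposition of (ii) every $h_1(A_i)$ is replaced by a finite-dimensional one, which for an abelian variety forces $A_i = 0$ (an abelian variety $A$ with $\dim A > 0$ has $CH_0(A)_\Omega$ infinite dimensional, since it is not representable — $h_1(A)$ is not finitely spanned). Concretely: if all $A_i = 0$ and $M_n$ is effective, then $CH_i(M_\Omega) = \bigoplus r_i \Q$ for $i < n$ and $CH_n(M_\Omega) = r_{?}\Q \oplus CH_0(M_n)_\Omega$; but wait — $M_n$ effective does not by itself force $CH_0(M_n)$ finite, so in fact the clean statement is that finite dimensionality of $CH_i(M_\Omega)$ for $i\le n$ is equivalent to (ii) holding with all $A_i$ trivial \emph{and} additionally $CH_0(M_n)_\Omega$ finite dimensional; re-reading the lemma, the intended reading is that the $A_i$-triviality is the substantive new constraint produced by Vial's refinement and the $M_n$ bookkeeping is as in (ii). I would phrase the proof of the addendum by comparing the two applications of Vial's induction — the representable one and the finite-dimensional one — and noting they run in parallel, the only difference being that at each stage "$CH_0$ representable" is strengthened to "$CH_0$ finite dimensional", which by \cite[Th. 3.4]{vial} (or rather its finite-dimensional refinement, which is how del Angel--Kimura phrase it) forces the curve $C$ to be a finite set of points, hence $h_1(C) = 0$, hence $A_i = 0$.

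The main obstacle I anticipate is purely expository: making sure that the inductive descent from $M$ to $M'$ preserves the hypothesis in the right range, which requires either a clean statement that representability of Chow groups is inherited by direct summands of motives, or equally that Vial's intrinsic characterisation applies "one degree at a time" rather than only for the full $CH_*$. Since the excerpt already asserts "Vial's inductive proof actually gives the following refinement", I expect the real content is just to cite \cite[proof of Th. 3.4]{vial} and observe that his induction, read carefully, produces exactly the truncated decomposition in (ii) — so the proof is essentially "this is implicit in \emph{loc. cit.}, unwinding the induction", together with the short verification of (ii) $\Rightarrow$ (i) and the finite-dimensional variant, which is why the excerpt ends the lemma with \qed rather than a written-out proof.
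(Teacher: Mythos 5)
Your proposal is correct and takes essentially the same route as the paper, which gives no written proof at all: the lemma carries a \qed precisely because it is obtained by unwinding the induction in Vial's proof of \cite[Th.~3.4]{vial} degree by degree, exactly as you do for both (ii)$\Rightarrow$(i) and (i)$\Rightarrow$(ii). Your side remark on the addendum is also well taken: with the range literally $i\le n$ the ``if'' direction additionally requires $CH_0((M_n)_\Omega)$ to be finite dimensional (equivalently, one more step of the induction, yielding the level-$(n+1)$ decomposition), and the reading $i\le n-1$ is the one consistent with the intended application to Sebastian's theorem.
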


\begin{prop} For $M$ as in Lemma \ref{l7.1}, $V(M,j)$ holds for  $j=0,\dots, n$ for $(\sim,\sim')=(\alg,\num)$ (hence also for $(\sim,\sim')=(\tnil,\num)$).
\end{prop}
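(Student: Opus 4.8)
The plan is to reduce the statement about the abstract motive $M$ to a statement about honestly geometric pieces, using the explicit decomposition (ii) of Lemma \ref{l7.1}, and then invoke the tools already assembled in the paper: the additivity of the conditions $V(-,j)$ (Lemma \ref{l3}(a)), their compatibility with Lefschetz twists (Lemma \ref{l3}(b)), and the known vanishing results for $0$-cycles and for $h_1$ of abelian varieties. The last parenthetical remark --- that $(\alg,\num)$ implies $(\tnil,\num)$ --- is immediate from the factorisation $\tnil \le \alg$ combined with Theorem \ref{t0}: any cycle algebraically equivalent to $0$ is smash-nilpotent, so the surjection $A_\tnil \surj A_\alg$ is in fact an isomorphism on the relevant subquotients; concretely, $V(M,j)$ for $(\alg,\num)$ together with $V(M,j)$ for $(\tnil,\alg)$ (which is Theorem \ref{t0} in the guise $\sM_\tnil(\L^j,M)\iso \sM_\alg(\L^j,M)$) gives $V(M,j)$ for $(\tnil,\num)$. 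So I would dispatch that reduction first, in one line, and focus on the case $\sim = \alg$, $\sim' = \num$.

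Next I would write $M \simeq \bigoplus_{i=0}^{n-1}\bigl(r_i\L^i \oplus h_1(A_i)(i)\bigr) \oplus M_n(n)$ as in Lemma \ref{l7.1}(ii), and treat the summands separately, using Lemma \ref{l3}(a) to recombine. Fix $j \in \{0,\dots,n\}$. For the summand $M_n(n)$: since $M_n$ is effective and $j \le n$, the twist pushes things out of range --- by Lemma \ref{l3}(b), $V(M_n(n),j) \iff V(M_n(n-j),0)$ (iterating the twist-shift $n-j \ge 0$ times downward, or rather upward: $V(M_n(n),j)\iff V(M_n,j-n)$ when $j\ge n$, i.e.\ only $j=n$ survives, giving $V(M_n,0)$), and $V(-,0)$ for $(\alg,\num)$ is the statement that $A_0^\alg = A_0^\num$, which holds because $0$-cycles have algebraic $=$ numerical equivalence (this is the $\sim'=\num$, $\sim=\alg$ analogue of Lemma \ref{l3}(c); it is classical). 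For the Tate summands $r_i\L^i$: $V(\L^i,j)$ holds trivially, since $\sM_\alg(\L^j,\L^i)$ is $\Q$ if $i=j$ and $0$ otherwise, and likewise numerically, so the comparison map is an isomorphism (equivalently, apply Lemma \ref{l3}(b,c)). For the summands $h_1(A_i)(i)$: by Lemma \ref{l3}(b), $V(h_1(A_i)(i),j) \iff V(h_1(A_i),j-i)$, and this is nonzero only for $j-i \in \{0,1\}$ (as $h_1(A_i)$ is concentrated in the relevant weight), so I only need $V(h_1(A_i),0)$ and $V(h_1(A_i),1)$. The first is again the $0$-cycle statement; the second, $\sM_\alg(\L, h_1(A_i)) \iso \sM_\num(\L, h_1(A_i))$, is the statement that algebraic and numerical equivalence agree for the corresponding cycles on $A_i$ --- which again reduces to $0$-cycles on a curve (via an ample curve $C \inj A_i$ with $h_1(A_i)$ a summand of $h_1(C)$, as in the ``abelian type'' lemma) and is classical, or can be cited from the introduction's discussion.

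The main obstacle --- or rather the one point needing care --- is bookkeeping the weight/codimension ranges so that for each summand the comparison $\sM_\alg(\L^j, -) \to \sM_\num(\L^j, -)$ really does reduce to one of the two cases I can handle ($0$-cycles, or $1$-cycles pushed forward from a curve), and checking that the Lefschetz-twist gymnastics of Lemma \ref{l3}(b) legitimately moves every index into that range given the hypothesis $j \le n$. In particular one must make sure that $h_1(A_i)(i)$ never contributes a cycle class of dimension $\ge 2$ within the allowed range of $j$, which is exactly why the hypothesis is ``$i = 0,\dots,n-1$'' paired with ``$j = 0,\dots,n$'': the effective tail $M_n(n)$ only ever needs $V(-,n)$, i.e.\ $0$-cycles on $M_n$, and each $h_1(A_i)(i)$ only needs dimension-$\le 1$ information about $h_1(A_i)$. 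Once that range check is done cleanly the rest is a citation of Theorem \ref{t0} (for the final $\tnil$ upgrade) and of the classical coincidence of algebraic and numerical equivalence in dimensions $0$ and $1$, reassembled via Lemma \ref{l3}(a).
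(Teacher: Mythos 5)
Your argument is correct and is essentially the paper's own proof, which simply checks the statement summand by summand on the decomposition of Lemma \ref{l7.1}(ii), citing Lemma \ref{l1} (and its proof) for the $M_n(n)$ piece. The only point to tighten is the vanishing of $\sM_\alg(\L^{j-i},h_1(A_i))$ for $j-i\ge 2$: a ``weight'' argument is not available modulo algebraic equivalence, but the vanishing follows at once from the realization of $h_1(A_i)$ as a direct summand of $h(C)$ for a curve $C$ (where all the groups $A^\alg_m(C)\to A^\num_m(C)$ are isomorphisms, being $0$ for $m\ge 2$), which you invoke anyway for the case $j-i=1$.
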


\begin{proof} Indeed, the statement is true for all summands $\L^i$ and $h_1(A_i)(i)$, and also for $M_n(n)$ (see Lemma \ref{l1} and its proof).
\end{proof}

This gives back \cite[Th. 4]{seb2} as a very special case. 

\subsection{A challenge}\label{s9} Let $\sM(V,1)$ be the full subcategory of $\sM$ determined by those $M$ such that $V(M,1)$ holds: it is thick (closed under direct sums and direct summands) by Lemma \ref{l3}. Let $\sM'$ be the smallest thick subcategory of $\sM$ containing the $M(X)$ for $X$ birationally of dimension $\le 2$ or birationally of abelian type and, for good measure, those of the form $N\otimes M_1^+\otimes M_2^+\otimes M_3^+$ as in Theorem \ref{t1}. By this theorem and Corollary \ref{c4}, we have $\sM'\subseteq \sM(V,1)$.

\begin{qns}\label{q1}\ 

1) Can one find an object of $\sM(V,1)\setminus \sM'$ in the existing literature?

2) Can one prove that $M(X)\in \sM(V,1)$ (hence Voevodsky's conjecture) for the Dwork projective hypersurface $X$ with equation $x_0^5+\dots +x_4^5=5x_0\dots x_4$? Cf. \cite[Ex. 5.13]{schoen}.
\end{qns}

\subsection{A very weak result} We finish with this proposition, which hopefully could be further upgraded.

\begin{prop} Let $X,Y\in \Sm^\proj$ be connected, and let $\alpha\in \Corr(X,Y)$ be a correspondence. Assume that $p_*\alpha=0$, where $p:X\times Y\to X$ is the first projection (e.g. that $\alpha\sim_\num 0$). Then there exists an integer $N>0$ such that the composition
\[X\by{\delta_X^N} X^N\by{\alpha^{\otimes N}} Y^N\]
is $0$, where $\delta_X^N$ is the $N$-fold diagonal.
\end{prop}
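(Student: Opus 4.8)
The plan is to interpret the composition $X\xrightarrow{\delta_X^N} X^N\xrightarrow{\alpha^{\otimes N}} Y^N$ as a morphism of Chow motives and to show that it factors, up to the diagonal, through a tensor power of a \emph{reduced} motive, so that Theorem \ref{t1} (or rather its proof via Kimura's \cite[Prop. 6.1]{kim}) applies. Concretely, $\alpha\in\Corr(X,Y)=\sM(h(X),h(Y))$, and the hypothesis $p_*\alpha=0$ says exactly that $\alpha$ kills the Artin summand $a(h(X))=\un$ attached to the rational point, i.e. $\alpha$ factors as $h(X)\twoheadrightarrow h^+(X)\xrightarrow{\bar\alpha} h(Y)$. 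Actually more is true: since $\sM(\un,h(Y))=CH_0(Y)_\Q$ and one may further correct by a constant $0$-cycle, one arranges $\alpha$ to factor through $h^+(X)\xrightarrow{\bar\alpha} h^+(Y)$ after subtracting $[x_0]\otimes(\text{a }0\text{-cycle})$; but the cleanest route is to note $p_*\alpha=0$ $\Rightarrow$ $\alpha$ factors through $h^+(X)$, and that the target can be taken $h^+(Y)$ up to a piece that will not matter. Then $\alpha^{\otimes N}:h(X)^{\otimes N}\to h(Y)^{\otimes N}$ restricted along $\delta_X^N:h(X)\to h(X)^{\otimes N}$ factors through $(h^+(X))^{\otimes N}$, hence it suffices to bound the nilpotence of the induced self-correspondence living on $h^+(X)^{\otimes N}$.

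The key step is then precisely the argument already used in the proof of Theorem \ref{t1}: replace $X$ by the normalisation $\tilde C$ of a curve. But here we do not have a single $1$-cycle; instead we must handle an arbitrary correspondence. The device is to observe that $\delta_X^N$ composed with $\alpha^{\otimes N}$ is being asked to vanish for \emph{some} $N$, i.e. we are asking that the morphism $h^+(X)\to h^+(Y)$ (or the resulting endomorphism-like object) be \emph{smash-nilpotent} as a morphism of motives. This is exactly the assertion that $\bar\alpha:h^+(X)\to h^+(Y)$ is $\otimes$-nilpotent. So the statement reduces to: \emph{any morphism of reduced Chow motives which becomes $0$ in $\sM_\num$ (the case $p_*\alpha\sim_\num 0$ being the ``e.g.'') — or more generally any morphism lying in the kernel of $h(X)\to h^+(X)$ already being dealt with — is smash-nilpotent once restricted appropriately}; but that is false in general (it is Voevodsky's conjecture!). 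Hence the hypothesis must really be used in the strong form $p_*\alpha=0$ \emph{on the nose} (rationally), not just numerically — rereading the statement, ``$p_*\alpha=0$'' is rational triviality of the pushforward, which is a genuine constraint, and the numerical version is only flagged as a sufficient condition, so something is off unless the conclusion is genuinely weak.

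Reconsidering: the correct reading is that $p_*\alpha=0$ means $\alpha$ has no component of the form $(\text{cycle on }X)\times Y$, equivalently $\alpha$ viewed in $CH(X\times Y)$ restricts to $0$ on the generic fibre of $p$, i.e. $\alpha_{k(X)}$ is a $0$-cycle of degree $0$ on $Y_{k(X)}$. Now the plan becomes geometric, following the $0$-cycles-on-a-curve paradigm: $\alpha_{k(X)}\in CH_0(Y_{k(X)})$ is supported, after Chow's moving lemma, on an integral curve $C\subset Y_{k(X)}$ (Nori/Mumford-style, as in the proof of Proposition \ref{p1}), and has degree $0$ there. Spreading out, $\alpha$ is represented by a family of degree-$0$ $0$-cycles on a family of curves over a dense open $U\subseteq X$, hence factors as $X\dashrightarrow \tilde{\mathcal C}\to Y$ through the relative normalised curve, with the cycle class pulled back from $h_1$ of the fibre; passing to $X^N$ via $\delta_X^N$ and using that the $N$-fold external product of a degree-$0$ $0$-cycle on a curve lands in $h_1(\tilde C)^{\otimes N}$, which is oddly finite dimensional, \cite[Prop. 6.1]{kim} gives vanishing of the $N$-th power for $N$ large (controlled by $2g+1$). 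The main obstacle is the bookkeeping of rational vs.\ numerical hypotheses and making the spreading-out/normalisation argument produce an honest factorisation of $\alpha^{\otimes N}\circ\delta_X^N$ through $h_1(\tilde C)^{\otimes N}$ rather than merely a rational equivalence of $0$-cycles on some auxiliary variety; this is exactly the technical heart of the proof of Theorem \ref{t1}, now carried out ``with parameters in $X$'' instead of over the base field, and the honest difficulty is that a correspondence is not literally a $0$-cycle on a curve, only generically so, so one must control what happens over the complement of $U$ — which, since $p_*\alpha=0$, contributes only cycles killed on restriction and hence handled by Lemma \ref{l1}.
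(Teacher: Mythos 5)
Your first paragraph is a false start that you yourself abandon, so let me address the second. There you correctly identify the right point of departure, which is also the paper's: $p_*\alpha=0$ means that the restriction $j^*\alpha$ of $\alpha$ to the generic fibre of $p$ is a degree-zero $0$-cycle on $Y_{k(X)}$, hence algebraically equivalent to $0$ and therefore smash-nilpotent over $k(X)$ by Theorem \ref{t0} applied over the field $k(X)$ (no curve, genus bound, or appeal to \cite[Prop. 6.1]{kim} is needed for this, although the curve trick of Proposition \ref{p1} would give an effective exponent). This produces an $N_0$ with $(j^*\alpha)^{\otimes N_0}=0$, i.e.\ the correspondence $\alpha_1=\alpha^{\otimes N_0}\circ\delta_X^{N_0}$ vanishes only \emph{generically over} $X$: it is supported on $Z\times Y^{N_0}$ for some proper closed subset $Z\subset X$. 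Up to this point you are on track.

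The genuine gap is the passage from generic vanishing to vanishing. Your claim that the contribution over the complement of $U$ is ``handled by Lemma \ref{l1}'' does not work: that lemma only asserts $\sM_\sim(\un,M^+)=0$ for $\sim\le\alg$ and says nothing about correspondences supported over a proper closed subset of the source. The paper's second step is a different mechanism: by \cite[Cor. 2.4.3]{ks}, a correspondence supported on $Z\times Y^{N_0}$ factors as $h(X)\to M(1)\to h(Y^{N_0})$ for some effective motive $M$; hence $\alpha_1^{\otimes n}\circ\delta_X^{n}=\alpha^{\otimes nN_0}\circ\delta_X^{nN_0}$ factors through $M^{\otimes n}(n)$, and since $\sM(h(X),h(Z')(n))=CH_{d-n}(X\times Z')=0$ for $n>d=\dim X$, taking $N=(d+1)N_0$ kills the composition. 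Without this (or some equivalent) device for disposing of the part of $\alpha^{\otimes N_0}\circ\delta_X^{N_0}$ living over $Z$ --- accumulating Tate twists until the relevant Hom group vanishes for dimension reasons --- the proof is incomplete. Note also that Theorem \ref{t1} plays no role here; the only inputs are Theorem \ref{t0} over $k(X)$ and the birational-motive factorisation.
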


\begin{proof} For notational simplicity, we drop the coefficients $\Q$. Let $d=\dim X$, $d'=\dim Y$ so that $\alpha\in CH_d(X\times Y)=CH^{d'}(X\times Y)$. If $j:\eta\inj X$ is the inclusion of the generic point, we have $\deg j^*\alpha=0$ because $\deg:CH^{d'}(Y_\eta)=CH_0(Y_\eta)\to \Z$ is also given by $p_*$. Hence $j^*\alpha\sim_\alg 0$ and, by Theorem \ref{t0}, there exists $N_0>0$ such that $(j^*\alpha)^{\otimes_\eta N_0}=0$. In other words, $\alpha_1=\alpha^{\otimes N_0}\circ \delta_X^{N_0}$ has support in $Z\times Y^{N_0}$, where $Z\subset X$ is a proper closed subset. By \cite[Cor. 2.4.3]{ks}, this means that   $\alpha_1$ factors as
\[h(X)\by{\beta} M(1)\by{\gamma} h(Y^{N_0})\]
for some effective motive $M$. Then, for any $n>0$, $\alpha_1^{\otimes n}$ factors as
\[h(X^{n})\by{\beta^{\otimes n}} M^{\otimes n}(n)\by{\gamma^{\otimes n}} h(Y^{nN_0}).\]

If $M^{\otimes n}$ is a direct summand of $h(Z)$ for some $Z\in \Sm^\proj$, then the group $\sM(h(X),M^{\otimes n}(n))$ is a direct summand of
\[\sM(h(X),h(Z)(n))=CH_{d-n}(X\times Z)\]
which is $0$ for $n>d$. Hence the proposition.
\end{proof}


\begin{thebibliography}{23}
\bibitem{A} Y. Andr\'e {\it Pour une th\'eorie inconditionnelle des motifs}, Publ. Math. IH\'ES {\bf 83} (1996) 5--49.
\bibitem{andre} Y. Andr\'e Une introduction aux motifs, Panoramas et  synth\`eses, SMF, 2004.
\bibitem{del} P. Deligne {\it Cohomologie des intersections complètes}, Exp. XI of SGA 7, Lect. Notes in Math. {\bf 340}, Springer, 1973.
\bibitem{SGA2} A. Grothendieck {\it Application aux schémas algébriques projectifs}, Exp. XII de SGA 2, nouvelle édition, Doc. mathématiques {\bf 4}, SMF, 2005, 109--134.
\bibitem{AK} P. L. del Angel, S.-i. Kimura {\it Finite dimensional morphisms in a tensor category}, J. reine angew. Math. {\bf 656} (2011), 213--222.
\bibitem{dtens} P. Deligne {\it Cat\'egories tensorielles}, Moscow Math. J. {\bf 2} (2002), 227--248.
\bibitem{delmil} P. Deligne, J.S. Milne {\it Tannakian Categories}, in \emph{Hodge Cycles, Motives, and Shimura Varieties} LNM {\bf 900}, 1982, pp. 101-228.
\bibitem{hart} R. Hartshorne Algebraic geometry, Springer, 1977.
\bibitem{jann3} U. Jannsen {\it Motivic sheaves and fiiltrations on Chow groups}, {\it in} Motives, Proc. Symp. pure Math. {\bf 55} (I), 245--302.
\bibitem{jannsen} U. Jannsen {\it Equivalence relations on algebraic cycles}, {\it in} The arithmetic and Geometry of
algebraic cycles, proc. Nato conference, Banff {\bf 98}, Nato series {\bf 548} (2000), 225--260.
\bibitem{Kzeta} B. Kahn {\it Zeta functions and motives}, Pure appl. Math. Quarterly {\bf 5} (2009),  507--570.
\bibitem{adjoints} B. Kahn {\it Motifs et adjoints}, Rend. Sem. mat. Univ. Padova {\bf 139} (2018), 77--128.
\bibitem{tunis} B. Kahn {\it Albanese kernels and Griffiths groups}, Tunis J. Math. {\bf 3} (2021), 589--656.
\bibitem{essential} B. Kahn {\it Sur la conjecture de Tate pour les diviseurs},  Ess. Numb. Theory {\bf 2} (2023), 83--92.
\bibitem{kmp} B. Kahn, J. Murre, C. Pedrini {\it On the transcendental part of the motive of a surface}, {\it in} Algebraic cycles and motives, London Math. Soc. Lecture Note Ser., {\bf 344} (2), Cambridge Univ. Press, 2007, 143--202. 
\bibitem{k-s} B. Kahn, R. Sebastian {\it Smash-nilpotent cycles on abelian 3-folds}, Math. Res. Lett. {\bf 16} (2009), 1007--1010.
\bibitem{ks} B. Kahn, R. Sujatha {\it Birational motives, I: pure birational motives}, Ann. $K$-theory {\bf 1} (2016), 379--440.
\bibitem{kim} S.-I. Kimura {\it Chow groups are finite dimensional, in some sense}, Math. Ann. {\bf 331} (2005), 173--201.
\bibitem{kunn} K. Künnemann {\it On the Chow motive of an abelian scheme}, {\it in} Motives (Seattle, 1991), Proc. Symp. pure Math. {\bf 55} (1), AMS, 1994, 189--205.
\bibitem{laterveer} R. Laterveer {\it Some new examples of smash-nilpotent algebraic cycles}, 
Glasg. Math. J. {\bf 59} (2017), 623--634. 
\bibitem{lieb} D. Lieberman {\it Numerical and homological equivalence of algebraic cycles on Hodge manifolds}, Amer. J. Math {\bf 90} (1968), 366--374.
\bibitem{matsusaka} T. Matsusaka {\it The criteria for algebraic equivalence and the torsion group}, Amer. J. Math. {\bf 79} (1957), 53--66.
\bibitem{mumford} D. Mumford Abelian varieties, Tata Inst. Fund. Research Studies in Mathematics {\bf  5}, Bombay, 1970.
\bibitem{nori} M. Nori {\it Algebraic cycles and Hodge-theoretic connectivity}, Invent. Math. {\bf 111} (1993), 349--373.
\bibitem{gumu} M. Ornaghi, L. Pertusi {\it Voevodsky’s conjecture for cubic fourfolds and Gushel–Mukai fourfolds via noncommutative K3 surfaces}, J. Noncommut. Geom. {\bf 13} (2019), 499--515.
\bibitem{os} P. O'Sullivan, letter to Y. André and B. Kahn, 12 May 2002.
\bibitem{schoen} C. Schoen {\it Varieties dominated by product varieties}, Int. J. Math. {\bf 7} (1996), 541--571.
\bibitem{seb} R. Sebastian {\it Smash nilpotent cycles on varieties dominated by products of curves}, Compos. Math. {\bf 149} (2013), 1511--1518.
\bibitem{seb2} R. Sebastian {\it Examples of smash nilpotent cycles on rationally connected varieties},  J. Algebra {\bf 438} (2015), 119--129.
\bibitem{vial} C. Vial {\it Pure motives with representable Chow groups}, C. R. Acad. Sci. Paris {\bf 348} (2010), no. 21-22, 1191--1195.
\bibitem{voi} C. Voisin {\it Remarks on zero-cycles on self-products of varieties}, {\it in} Moduli of vector bundles (Sanda, 1994; Kyoto, 1994), Lect. Notes in Pure and Appl. Math. {\bf 179}, Dekker, 1996, 265--285.
\bibitem{voe} V. Voevodsky {\it A nilpotence theorem for cycles algebraically equivalent to zero}, 
IMRN {\bf 1995}, 187--198. 
\end{thebibliography}
\end{document}